\title{On the structure of balanced residuated \\ partially ordered monoids}
\author{Stefano Bonzio\inst{1}\orcidID{0000-0002-5959-5868}\and Jos\'e Gil-F\'erez\inst{2}\orcidID{0000-0002-3086-6070} \and \\
Peter Jipsen\inst{2}\orcidID{0000-0001-8608-808X} \and
Adam P\v{r}enosil \inst{3}\orcidID{0000-0003-0377-0783} \and \\
Melissa Sugimoto\inst{2}\orcidID{0009-0004-7168-8954}}
\authorrunning{S. Bonzio, J. Gil-F\'erez, P. Jipsen, A. P\v{r}enosil, M. Sugimoto}
\institute{
University of Cagliari, Italy\\
\email{stefano.bonzio@unica.it}
\and
Chapman University, Orange CA 92866, USA\\
\email{\{gilferez,jipsen,msugimoto\}@chapman.edu}
\and
University of Barcelona, Spain\\
\email{adam.prenosil@gmail.com}
}
\newcounter{dummy}
\newcommand\myitem[1][]{\item[#1]\refstepcounter{dummy}\def\@currentlabel{#1}}
\tikzstyle{every picture} = [scale=.45]
\tikzstyle{every node} = [draw, fill=white, circle, inner sep=0pt, minimum size=4pt]
\tikzstyle{d} = [very thick]
\tikzstyle{i} = [draw, fill=black, circle, inner sep=0pt, minimum size=4pt]
\tikzstyle{n} = [draw=none, rectangle, inner sep=2pt]
\let\ld\backslash
\let\le\leqslant
\let\meta\looparrowright
\let\rd/
\let\setminus\smallsetminus
\let\subset\subseteq
\let\sle\sqsubseteq
\newcommand\lscr[2]{\leftindex_{#1}{#2}}
\newcommand{\m}{\mathbf}
\newcommand\nbd[1]{\protect\nobreakdash#1\hspace{0pt}}
\newcommand\nein{\mathord{\sim}}
\newcommand\no{\mathord{-}}
\newcommand\pair[1]{\langle#1\rangle}
\DeclareMathOperator\Idp{Id^+\!}
\newcommand\category[1]{\expandafter\newcommand\csname#1\endcsname{\textsf{#1}}}
\begin{document}

\maketitle              

\begin{abstract}
A \emph{residuated poset} is a structure $\pair{A,\le,\cdot,\ld,\rd,1}$ where $\pair{A,\le}$ is a poset and $\pair{A,\cdot,1}$ is a monoid such that the residuation law $x\cdot y\le z\iff x\le z/y\iff y\le x\backslash z$ holds. A residuated poset is \emph{balanced} if it satisfies the identity $x\backslash x \approx x/x$. By generalizing the well-known construction of P\l{}onka sums, we show that a specific class of balanced residuated posets can be decomposed into such a sum indexed by the set of positive idempotent elements. Conversely, given a semilattice directed system of residuated posets equipped with two families of maps (instead of one, as in the usual case), we construct a residuated poset based on the disjoint union of their domains. We apply this approach to provide a structural description of some varieties of residuated lattices and relation algebras.
\end{abstract}

\section{Introduction}

A \emph{residuated partially ordered monoid}, or \emph{residuated poset} for short, is a structure of the form $\m A = \pair{A,\le,\cdot,\ld,\rd,1}$ such that $\pair{A,\le}$ is a poset, $\pair{A,\cdot,1}$ is a monoid, and the \emph{multiplication} $\cdot$ has \emph{residuals} $\ld$ and $\rd$, that is,
\[
x\cdot y \le z \quad\iff\quad x \le z\rd y \quad\iff\quad y \le x\ld z.
\]
As usual, we denote the product $x\cdot y$ simply by $xy$. The terminology ``residuated posets'' emphasizes the similarity with residuated lattices since these algebras are residuated posets in which the partial order is in fact a lattice. Residuated lattices form a variety, whereas residuated posets are a po-variety, i.e. they are defined by inequations and all fundamental operations are either order-preserving or order-reversing in each argument~\cite{Pi04}.

The class of residuated posets includes all groups (ordered by the antichain order), partially ordered groups, hoops, Brouwerian semilattices and generalized Boolean algebras, as well as all subreducts of residuated lattices. In the case of groups with the antichain order, the residuals are $x\ld y=x^{-1}y$ and $x/y=xy^{-1}$. Under this interpretation of the residuals, groups satisfy the identity $x\ld x=x/x$, which can fail in relation algebras and complex algebras of groups. Residuated posets in which this identity holds are called \emph{balanced}, and they are the main focus of our investigations. 

P\l{}onka sums are reviewed in Section~\ref{plonka}, and they can be an effective tool for decomposing algebras into a semilattice sum of simpler components. Here we generalize P\l{}onka sums to po-algebras, and in particular to residuated posets. Previously, P\l{}onka sums have been used for a structural description of locally integral involutive po-monoids in~\cite{GFJiLo23}. These are a subclass of residuated posets in which the residuals are definable from two linear negations $\nein,\no$ via $x\ld y=\nein(\no y\cdot x)$ and $x\rd y=\no(y\cdot \nein x)$. Local integrality is a property that ensures the components of our decompositions are integral, i.e., have the monoid identity as top element. In the current paper we show how to obtain similar structural results for a much larger class of balanced residuated posets that are neither involutive nor locally integral.

\section{Background}

\subsection{P\l{}onka Sums and Partition Functions}\label{plonka}

Our main result relies on a generalization of the notion of a P\l{}onka sum. This construction was first introduced and studied by J. P\l{}onka in~\cite{Plonka67,Plonka68a,Plonka68b} (for more recent expositions see~\cite{PR92} and~\cite{BPP22}). Given a join semilattice of \emph{indices} $\m I = (I, \lor)$, a family of homomorphisms $\Phi = \{\varphi_{ij}\: \m A_i\to \m A_j : i\le j \text{ in } \m I\}$ between algebras of the same type is said to be a \emph{semilattice directed system} if $\varphi_{ii}$ is the identity on~$\m A_i$ for every index $i$ and $\varphi_{jk}\circ\varphi_{ij} = \varphi_{ik}$, for all indices $i \le j \le k$. If the algebras contain constants, we assume that $\m I$ has a least element $\bot$. The \emph{P\l{}onka sum} of a semilattice directed system $\Phi$ is an algebra $\m S$ of the same type defined on the disjoint union of the universes $S = \biguplus_{i\in I} A_i$. For every $n$-ary operation symbol $\sigma$ and elements $a_1\in A_{i_1}$, \dots, $a_n\in A_{i_n}$,
\[
\sigma^{\m S}(a_1,\dots,a_n) = \sigma^{\m A_j}(\varphi_{i_1j}(a_1),\dots,\varphi_{i_nj}(a_n)),
\]
where $j = i_1\lor \dots\lor i_n$, and for every constant symbol $\omega$, $\omega^{\m S} = \omega^{\m A_\bot}$.

One can readily prove that the P\l{}onka sum of a semilattice directed system is well defined and it satisfies all regular equations that hold in all the algebras of the family. An equation $t_1\approx t_2$ is \emph{regular} if the variables of $t_1$ and $t_2$ are the same. P\l{}onka sums can equivalently be understood in terms of so-called partition functions. A \emph{partition function} on an algebra ${\m A}$ is a binary operation $\odot\: A^2\to A$ satisfying the following conditions,\footnote{In the literature, one can find different definitions of a partition function. We are here opting for the definition that appears in~\cite{BPP22} and~\cite{PR92}, which makes use of the minimal number of conditions.} for every $n$-ary operation symbol~$\sigma$, every constant symbol~$\omega$, and all $a, b, c, a_1,\dots, a_n\in A$,
\begin{enumerate}[label=(PF\arabic*), leftmargin=*]
\item\label{PF1} $a\odot a = a$,
\item\label{PF2} $a\odot (b\odot c) = (a\odot b) \odot c $,
\item\label{PF3} $a\odot (b\odot c) = a\odot (c\odot b)$,
\item\label{PF4} $\sigma^{\mathbf{A}}(a_1,\dots,a_n)\odot b = \sigma^{\mathbf{A}}(a_1\odot b,\dots, a_n\odot b)$,
\item\label{PF5} $b\odot \sigma^{\mathbf{A}}(a_1,\dots,a_n) = b\odot a_{1}\odot \dots\odot a_n $,
\item\label{PF6} $b\odot \omega^{\mathbf{A}} = b$. 
\end{enumerate}

An operation $\odot$ satisfying~\ref{PF1}--\ref{PF3} is known as a \emph{left normal band}. The connection between partition functions and P\l{}onka sums is made clear by the following P\l{}onka's decomposition theorem.

\begin{theorem}\textnormal{\cite[Thm.~II]{Plonka67}}\label{thm:Plonka:Theorem}
Let $\mathbf{A}$ be an algebra with a partition function $\odot$.
\begin{enumerate}[(1)]
\item There exists a partition $\{ A_i : i \in I\}$ of $A$, such that any two elements $a, b \in A$ belong to the same equivalence class exactly when
\[
a\odot b = a \quad\text{and}\quad b\odot a = b.
\]
\item The relation $\le$ on $I$ defined by the condition
\[
i \le j \iff \text{there exist }a \in A_i \text{ and } b \in A_j \text{ such that } b\odot a =b
\]
is a partial order and $\m I = \pair{I,\le}$ is a join semilattice. If $\m A$ contains some constant, then $\m I$ has a least element $\bot$. 

\item Each equivalence class $A_i$ is the universe of an algebra $\m A_i$ of the same type, whose constant-free reduct is a subalgebra of the constant-free reduct of~$\m A$. If $\m A$ contains some constant, then $\m A_\bot$ is a subalgebra of $\m A$.

\item For all $i,j\in I$ such that $i\le j$ there is a homomorphism $\varphi_{ij}\: \m A_i\to \m A_j$, defined by $\varphi_{ij}(a)= a\odot b$, for any $b\in A_j$.

\item $\m A $ is the P\l{}onka sum of the semilattice directed system $\{\varphi_{ij} : i \le j \text{ in }\m I\}$.
\end{enumerate}
\end{theorem}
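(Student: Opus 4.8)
\emph{Proof sketch.} The plan is to derive everything from the fact that, by~\ref{PF1}--\ref{PF3}, the operation $\odot$ is a left normal band operation, together with the interaction axioms~\ref{PF4}--\ref{PF6}. First I would record the two band identities used throughout: $a\odot b\odot a = a\odot b$ and $a\odot b\odot c = a\odot c\odot b$, both immediate from~\ref{PF1}--\ref{PF3}. For part~(1) I set $a\sim b$ iff $a\odot b = a$ and $b\odot a = b$; reflexivity is~\ref{PF1}, symmetry is built into the definition, and transitivity is a short computation using~\ref{PF2}. Then $I := A/{\sim}$ with the blocks $A_i$ gives the required partition, and I would note in passing that $a\odot b\sim b\odot a$ for all $a,b$, so the two products always land in a common block.

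For part~(2) the one step I would isolate as a lemma --- it is invoked constantly afterwards --- is \emph{witness independence}: if $b_0\odot a_0 = b_0$ for some $a_0\in A_i$ and $b_0\in A_j$, then $b\odot a = b$ for \emph{every} $a\in A_i$ and $b\in A_j$; this follows by inserting $a_0$ and $b_0$ using~\ref{PF2} and the definition of $\sim$. Granting it, reflexivity, antisymmetry, and transitivity of $\le$ are routine (antisymmetry reduces to the definition of $\sim$). To see that $\m I$ is a join semilattice I would prove $[a]\lor[b] = [a\odot b]$: the identities $(a\odot b)\odot a = a\odot b$ and $(a\odot b)\odot b = a\odot b$ give $[a],[b]\le[a\odot b]$, while $[a],[b]\le[c]$ yields $c\odot(a\odot b) = (c\odot a)\odot b = c$, so $[a\odot b]\le[c]$. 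If $\m A$ has a constant, then~\ref{PF6} collapses all constants into a single block and forces it to be least; call it $A_\bot$.

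Parts~(3) and~(4) are where~\ref{PF4} and~\ref{PF5} enter. Closure of $A_i$ under each $\sigma$ follows by showing $\sigma^{\m A}(a_1,\dots,a_n)\sim a_1$ when every $a_k\in A_i$: by~\ref{PF4} we get $\sigma^{\m A}(a_1,\dots,a_n)\odot a_1 = \sigma^{\m A}(a_1,\dots,a_n)$ (using $a_k\odot a_1 = a_k$), and by~\ref{PF5} we get $a_1\odot\sigma^{\m A}(a_1,\dots,a_n) = a_1$ (using $a_1\odot a_k = a_1$). Hence each constant-free reduct $\m A_i$ is a subalgebra of the constant-free reduct of $\m A$, and $\m A_\bot$, which contains the constants, is a subalgebra of $\m A$. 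For the transition maps I would check that when $i\le j$, $a\in A_i$, $b\in A_j$ the element $a\odot b$ lies in $A_j$ --- witness independence gives $b\odot a = b$, hence $b\odot(a\odot b) = b$ --- and that $a\odot b$ does not depend on the choice of $b\in A_j$, which is precisely where the identity $a\odot b\odot b' = a\odot b'\odot b$ is used. That $\varphi_{ij}$ preserves the operations is then immediate from~\ref{PF4}.

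For part~(5) the directed-system conditions are $\varphi_{ii} = \mathrm{id}$ (from~\ref{PF1}) and $\varphi_{jk}\circ\varphi_{ij} = \varphi_{ik}$ for $i\le j\le k$ (expand $(a\odot b)\odot c = a\odot(b\odot c)$, observe that $b\odot c$ lies in $A_k$, and use representative independence of $\varphi_{ik}$). The remaining point --- the one I expect to be the least mechanical --- is that the P\l{}onka sum operations recover those of $\m A$: writing $j = [a_1]\lor\dots\lor[a_n]$, one must show $\sigma^{\m A}(a_1\odot b,\dots,a_n\odot b) = \sigma^{\m A}(a_1,\dots,a_n)$ for every $b\in A_j$. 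The trick is to exploit the representative independence just established and take the convenient $b = a_1\odot\dots\odot a_n$, which does lie in $A_j$ since $[a_1\odot\dots\odot a_n] = [a_1]\lor\dots\lor[a_n]$; then~\ref{PF5} read from right to left gives $a_k\odot b = a_k\odot s$, where $s = \sigma^{\m A}(a_1,\dots,a_n)$, and~\ref{PF4} followed by~\ref{PF1} collapses $\sigma^{\m A}(a_1\odot s,\dots,a_n\odot s) = \sigma^{\m A}(a_1,\dots,a_n)\odot s = s\odot s = s$. The constant clause of~(5) is immediate since $\omega^{\m A}\in A_\bot$, whence $\omega^{\m S} = \omega^{\m A_\bot} = \omega^{\m A}$.
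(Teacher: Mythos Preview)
Your argument is correct and follows the standard route. The paper does not prove this theorem in place (it is cited from P\l{}onka's original work), but in Section~4 it revisits the proof and breaks it into Lemmas~\ref{lem:PF123:join:semilattice}, \ref{lem:PF123:phis:directed:system}, and~\ref{lem:A:PF5:closure:under:sigma} precisely in order to track which of the axioms \ref{PF1}--\ref{PF6} are used at each step. The one substantive difference between your write-up and the paper's decomposition is in part~(3): you invoke \ref{PF4} to obtain $\sigma^{\m A}(a_1,\dots,a_n)\odot a_1 = \sigma^{\m A}(a_1,\dots,a_n)$, whereas the paper (Lemma~\ref{lem:A:PF5:closure:under:sigma}) derives this using only \ref{PF1} and \ref{PF5}, via
\[
\sigma^{\m A}(\bar a)\odot a_i
= \sigma^{\m A}(\bar a)\odot\sigma^{\m A}(\bar a)\odot a_i
= \sigma^{\m A}(\bar a)\odot a_1\odot\cdots\odot a_n\odot a_i
= \sigma^{\m A}(\bar a).
\]
This lets the paper conclude that \ref{PF4} is needed \emph{only} for the homomorphism property of the maps $\varphi_{ij}$ --- exactly the hypothesis that is then relaxed to arrive at partition systems and metamorphisms (Theorem~\ref{thm:Plonka:decomposition:meta}). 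Your proof is perfectly valid for the theorem as stated, but it blurs this separation of roles; if the goal is to generalize, the paper's bookkeeping is what makes the next step possible.
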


\subsection{Positive Idempotents in Residuated Posets}

An element $p$ of a residuated poset $\m A$ is \emph{positive} if $1\le p$, and it is \emph{idempotent} if $p^2 = p\cdot p = p$. We denote by $\Idp\m A$ the set of all positive idempotents of $\m A$. For every element $p\in\Idp\m A$, we can define the following sets:
\begin{align*}
A\rd p &= \{a\rd p : a\in A\}, & Ap &= \{ap : a\in A\}, & A_p &= \{a\in A : p = a\ld a\},\\
p\ld A &= \{p\ld a : a\in A\}, & pA &= \{pa : a\in A\}, & \lscr pA &= \{a\in A : p = a\rd a\}.
\end{align*}

\begin{lemma}\label{lem:char:A/p=Ap}
We have the following equalities, for every $p\in\Idp\m A$.
\begin{align*}
1.\quad    A\rd p &= \{a\in A : a = a\rd p\} = \{a\in A : a\le a\rd p\} = \{a\in A : ap\le a\}\\
 &= \{a\in A : ap = a\} = Ap.\\
2.\quad    p\ld A &= \{a\in A : a = p\ld a\} = \{a\in A : a\le p\ld a\} = \{a\in A : pa\le a\}\\
 &= \{a\in A : pa = a\} = pA.
\end{align*}
\end{lemma}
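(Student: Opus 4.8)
The plan is to prove part~1 by chaining together inclusions among the six sets displayed on the right-hand side, and then to obtain part~2 by the order-dual argument (replace $\cdot$ by its opposite product and interchange $\ld$ and $\rd$). The only tools needed are the residuation law, the monotonicity of the operations, and the two hypotheses $1\le p$ and $p^2=p$. For brevity write, in the order listed, $S_1=A\rd p$, $S_2=\{a\in A:a=a\rd p\}$, $S_3=\{a\in A:a\le a\rd p\}$, $S_4=\{a\in A:ap\le a\}$, $S_5=\{a\in A:ap=a\}$, and $S_6=Ap$.

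First I would record two facts that follow from positivity and hold for \emph{every} $a\in A$: since $a=a1\le ap$, we have $a\le ap$; and since $a\rd p=(a\rd p)1\le(a\rd p)p\le a$ — the last inequality being residuation applied to $a\rd p\le a\rd p$ — we have $a\rd p\le a$. The equivalence $ap\le a\iff a\le a\rd p$ is itself an instance of residuation, so $S_3=S_4$. Combining the inequality defining $S_3$ with $a\rd p\le a$ turns it into the equality defining $S_2$, so $S_2=S_3$; similarly, combining the inequality defining $S_4$ with $a\le ap$ gives $S_4=S_5$.

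It remains to tie in the ``outer'' sets $S_1$ and $S_6$. The inclusions $S_2\subseteq S_1$ and $S_5\subseteq S_6$ are immediate from the definitions. For $S_1\subseteq S_2$, let $b=a\rd p$ with $a\in A$; using the residuation identity $(z\rd y)\rd x=z\rd(xy)$ and then idempotency, $b\rd p=(a\rd p)\rd p=a\rd(pp)=a\rd p=b$, so $b\in S_2$. For $S_6\subseteq S_5$, let $b=ap$ with $a\in A$; then $bp=(ap)p=ap^2=ap=b$, so $b\in S_5$. Collecting everything, $S_1=S_2=S_3=S_4=S_5=S_6$, which is the claim. Part~2 is proved identically, now using the mirror identity $x\ld(y\ld z)=(yx)\ld z$ and the positivity consequences $a\le pa$ and $p\ld a\le a$.

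I do not expect a genuine obstacle: the argument is a bookkeeping exercise in which the only points requiring care are tracking where positivity is used (the two ``for every $a$'' inequalities) versus where idempotency is used (the two identities $(a\rd p)\rd p=a\rd p$ and $(ap)p=ap$), and checking that the standard one-sided residuation identities $(z\rd y)\rd x=z\rd(xy)$ and $x\ld(y\ld z)=(yx)\ld z$ are available in residuated posets — which they are, since their proofs use only associativity of $\cdot$ and the residuation law.
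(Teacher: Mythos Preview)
Your proposal is correct and follows essentially the same approach as the paper: both arguments split off the three middle equalities using positivity and residuation, and then handle the outer sets $A\rd p$ and $Ap$ via the idempotence identities $(a\rd p)\rd p=a\rd p$ and $(ap)p=ap$, with part~2 declared analogous. The only cosmetic difference is that the paper derives $a\rd p\le a$ directly from antitonicity of the residual in the denominator ($a\rd p\le a\rd 1=a$), whereas you route through $(a\rd p)p\le a$; both are immediate.
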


\begin{proof}
We will only prove part~1, since the proof of part~2 is completely analogous. First of all, notice that the three middle equalities are consequences of residuation and the fact that $p$ is positive and therefore $a\rd p \le a\rd 1 = a$ and $a = a\cdot 1 \le ap$. Now, for every $b\in A$, we have that $b\rd p = b\rd p^2 = (b\rd p)\rd p$, which shows that $A\rd p \subseteq \{a\in A : a = a\rd p\}$. The reverse inclusion is trivial. We also have that for every $b\in A$, the equality $bpp = bp$ implies that $Ap\subset \{a\in A : ap =a\}$. The reverse inclusion is also trivial.\qed
\end{proof}

\begin{lemma}\label{lem:IdpA:identities}
Let $\m A$ be a residuated poset and $a\in A$.
\begin{enumerate}[(1)]
    \item $a\rd a$ is the largest $p\in\Idp\m A$ such that $a\in p\ld A$.
    \item $a\ld a$ is the largest $p\in\Idp\m A$ such that $a\in A\rd p$.
\end{enumerate}
Moreover,
\[
\Idp\m A = \{a\rd a : a\in A\} = \{a\ld a : a\in A\},
\]
and hence the sets $\{\lscr pA : p\in\Idp\m A\}$ and $\{A_p : p\in\Idp\m A\}$ are partitions of~$A$.
\end{lemma}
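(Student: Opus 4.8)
The plan is to establish the two ``largest positive idempotent'' characterizations first, and then read off the remaining assertions from them together with Lemma~\ref{lem:char:A/p=Ap}.

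For part~(1), I would begin by verifying that $a\rd a$ is itself a positive idempotent. Positivity, $1\le a\rd a$, unfolds by residuation to the triviality $a = 1\cdot a\le a$. For idempotency, residuation gives $(a\rd a)a\le a$, so order-preservation of the product yields $(a\rd a)(a\rd a)a\le(a\rd a)a\le a$, whence $(a\rd a)(a\rd a)\le a\rd a$; the reverse inequality is $a\rd a = 1\cdot(a\rd a)\le(a\rd a)(a\rd a)$, using the positivity just shown. Combining $(a\rd a)a\le a$ with $a = 1\cdot a\le(a\rd a)a$ gives $(a\rd a)a = a$, which by Lemma~\ref{lem:char:A/p=Ap}(2) says precisely that $a\in(a\rd a)\ld A$. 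Finally, if $p\in\Idp\m A$ with $a\in p\ld A$, then $pa = a$ by Lemma~\ref{lem:char:A/p=Ap}(2), so $pa\le a$, i.e.\ $p\le a\rd a$; hence $a\rd a$ is the largest such $p$. Part~(2) is then obtained by the symmetric argument, interchanging left with right and $\ld$ with $\rd$ throughout.

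For the displayed equalities, parts~(1) and~(2) yield the inclusions $\{a\rd a:a\in A\}\subseteq\Idp\m A$ and $\{a\ld a:a\in A\}\subseteq\Idp\m A$, while conversely every $p\in\Idp\m A$ satisfies $p = p\rd p$: from $p\cdot p = p$ residuation gives $p\le p\rd p$, and $p\rd p\le p\rd 1 = p$ since $1\le p$ and $\rd$ is order-reversing in its second argument; symmetrically $p = p\ld p$. The partition claims then follow immediately: the classes $\lscr{(a\rd a)}{A}$ cover $A$ because $a\in\lscr{(a\rd a)}{A}$ with $a\rd a\in\Idp\m A$; each $\lscr{p}{A}$ is nonempty, since $p\in\lscr{p}{A}$ by $p = p\rd p$; and they are pairwise disjoint, since $a\in\lscr{p}{A}\cap\lscr{q}{A}$ forces $p = a\rd a = q$. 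The same reasoning, with $A_p$ and $\ld$ in place of $\lscr{p}{A}$ and $\rd$, handles the family $\{A_p:p\in\Idp\m A\}$. I do not expect any genuine obstacle here: the only steps calling for a little care are the two-sided verification that $a\rd a$ is idempotent (where order-preservation of the product is the one ingredient not already packaged into Lemma~\ref{lem:char:A/p=Ap}) and keeping track of which half of Lemma~\ref{lem:char:A/p=Ap} to apply to $p\ld A$ versus $A\rd p$.
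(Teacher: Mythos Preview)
Your argument is correct and follows essentially the same route as the paper: you establish positivity and idempotency of $a\rd a$ via the same residuation manipulations, verify $a\in(a\rd a)\ld A$, and derive maximality in the same way; for the reverse inclusion $\Idp\m A\subseteq\{a\rd a:a\in A\}$ you give a direct argument for $p=p\rd p$ where the paper instead appeals to Lemma~\ref{lem:char:A/p=Ap}, but the two are equivalent one-line computations. Your explicit spelling-out of the partition claims is a welcome addition, as the paper leaves this implicit.
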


\begin{proof}
Notice that $(a\rd a)a \le a$ and $1\le a\rd a$ hold by residuation, and since $(a\rd a)(a\rd a)a \le (a\rd a)a \le a$, we also have that $(a\rd a)(a\rd a) \le a\rd a$. And, by order-preservation of the product, $a\rd a = (a\rd a)\cdot 1 \le (a\rd a)(a\rd a)$. Hence, $p = a\rd a \in\Idp\m A$ and $pa \le a$, that is, $a\in p\ld A$. Let $q\in \Idp\m A$ such that $a\in q\ld A$. Then $qa\le a$ and, by residuation, $q\le a\rd a$. The proof of the second claim is analogous.

The inclusion $\{a\rd a : a\in A\}\subset\Idp\m A$ is true by part~1. For the reverse inclusion, notice that if $p\in\Idp\m A$ then $pp=p$, and therefore $p\in A\rd p$. Thus, $p=p\rd p$, by Lemma~\ref{lem:char:A/p=Ap}. The other equality is shown analogously using~2.\qed
\end{proof}

The following lemma is an immediate consequence of this.

\begin{lemma}\label{char:interdefinability}
For every $p\in\Idp\m A$, we have the following equalities.
\[
A\rd p = \smashoperator{\bigcup\limits_{p\le q\in\Idp\m A}} A_q,
\quad A_p = (A\rd p)\setminus \smashoperator{\bigcup\limits_{p < q\in\Idp\m A}} A_q,
\quad p\ld A = \smashoperator{\bigcup\limits_{p\le q\in\Idp\m A}} \lscr qA,
\quad \lscr pA = (p\ld A)\setminus \smashoperator{\bigcup\limits_{p < q\in\Idp\m A}} \lscr qA.
\]
\end{lemma}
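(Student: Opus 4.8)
The plan is to derive each of the four equalities from Lemma~\ref{lem:IdpA:identities}, which already does all the real work; the remaining task is purely set-theoretic bookkeeping about the partition $\{A_q : q\in\Idp\m A\}$ and the monotonicity of the map $q\mapsto A\rd q$.

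First I would establish the identity $A\rd p = \bigcup_{p\le q\in\Idp\m A} A_q$. For the inclusion $\supseteq$, suppose $a\in A_q$ with $p\le q$; then by definition $q = a\ld a$, and by Lemma~\ref{lem:IdpA:identities}(2) this $q$ is the largest positive idempotent with $a\in A\rd q$, so in particular $a\in A\rd q$. It remains to note that $A\rd q\subseteq A\rd p$ whenever $p\le q$: indeed, by Lemma~\ref{lem:char:A/p=Ap} an element $b$ lies in $A\rd q$ iff $bq = b$, and then $bp\le bq = b$ forces $b\in A\rd p$ (again by Lemma~\ref{lem:char:A/p=Ap}, which characterizes $A\rd p$ as $\{a : ap\le a\}$). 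Hence $a\in A\rd p$. For the inclusion $\subseteq$, take $a\in A\rd p$; by Lemma~\ref{lem:IdpA:identities}(2), $q := a\ld a$ is the largest positive idempotent with $a\in A\rd q$, and in particular $p\le q$ (since $p$ itself is such an idempotent) and $a\in A_q$ by definition of $A_q$. This proves the first equality, and the third is proved in exactly the same way using part~(1) of Lemma~\ref{lem:IdpA:identities} and part~2 of Lemma~\ref{lem:char:A/p=Ap}.

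Next I would handle $A_p = (A\rd p)\setminus\bigcup_{p<q\in\Idp\m A} A_q$. The definition $A_p = \{a : p = a\ld a\}$ together with Lemma~\ref{lem:IdpA:identities}(2) says precisely that $a\in A_p$ iff $p$ is the \emph{largest} positive idempotent with $a\in A\rd p$; equivalently, $a\in A\rd p$ but $a\notin A\rd q$ for every $q>p$ in $\Idp\m A$. Using the first equality already proved, $a\notin A\rd q$ for all $q>p$ is the same as $a\notin\bigcup_{p<q} A_q$ — here one should observe that since the $A_q$ partition $A$, the element $a$ lies in exactly one class $A_r$, and $a\in A\rd q$ holds iff $q\le r$, so $a$ avoids all $A_q$ with $q>p$ precisely when no such $q$ is $\le r$, i.e.\ when $r\not>p$; combined with $a\in A\rd p$ (which forces $p\le r$) this yields $r=p$. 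The fourth equality follows symmetrically. I expect the main (though still modest) obstacle to be keeping the direction of the inequalities straight: one must remember that $A\rd{}$ is \emph{antitone}-to-the-partition in the sense that larger idempotents give smaller sets $A\rd q$, yet the $A_q$ themselves do not nest, so the set difference in the second and fourth equalities genuinely needs the partition property of Lemma~\ref{lem:IdpA:identities}, not merely the inclusion $A\rd q\subseteq A\rd p$.
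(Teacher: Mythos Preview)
Your proposal is correct and matches the paper's approach: the paper simply records this lemma as ``an immediate consequence'' of Lemma~\ref{lem:IdpA:identities}, and you have filled in exactly the set-theoretic bookkeeping that justifies that remark. The only minor observation is that your argument for the second (and fourth) equality is more elaborate than needed --- once you have $A\rd p = \bigcup_{p\le q} A_q$ and know the $A_q$ form a partition, the identity $A_p = (A\rd p)\setminus\bigcup_{p<q}A_q$ drops out by pure disjointness without re-invoking the ``largest idempotent'' characterization.
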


\begin{lemma}\label{lem:Ap:closure:meets:joins}
Let $\m A$ be a residuated poset and $p\in\Idp\m A$.
\begin{enumerate}[(1)]
\item The maps $a \mapsto pa$ and $a \mapsto ap$ are join-preserving closure operators whose images are $pA$ and $A p$, respectively.
\item The maps $a \mapsto p\ld a$ and $a \mapsto a\rd p$ are meet-preserving interior operators whose images are $p\ld A$ and $A\rd p$, respectively.
\item The sets $pA = p\ld A$ and $Ap = A\rd p$ are closed under existing meets and joins.
\end{enumerate}
\end{lemma}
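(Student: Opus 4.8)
The plan is to derive parts~(1) and~(2) from elementary facts about adjunctions on posets, and then to read off part~(3) from them together with Lemma~\ref{lem:char:A/p=Ap}.

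For part~(1), I would check directly that $f\colon a\mapsto pa$ is a closure operator on $\pair{A,\le}$: it is monotone since multiplication is order-preserving, it is inflationary since $1\le p$ gives $a=1a\le pa$, and it is idempotent since $p(pa)=(pp)a=pa$ because $p$ is idempotent. Its image is $pA$ by the very definition of $pA$, and since the image of a closure operator coincides with its set of fixed points this also recovers $pA=\{a:pa=a\}$ (cf.\ Lemma~\ref{lem:char:A/p=Ap}). The map $a\mapsto ap$ is treated symmetrically, with image $Ap$. For join-preservation, note that residuation states precisely that $pa\le b\iff a\le p\ld b$, i.e.\ $f$ is a left adjoint, and a left adjoint on posets preserves every join that exists: if $c=\bigvee_i a_i$ then $fc$ is an upper bound of $\{fa_i\}$, while any upper bound $d$ of $\{fa_i\}$ satisfies $a_i\le p\ld d$ for all $i$, hence $c\le p\ld d$, hence $fc\le d$; thus $fc=\bigvee_i fa_i$. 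The same argument with $-\cdot p$ in place of $p\cdot -$ handles $a\mapsto ap$.

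For part~(2), dually, $a\mapsto p\ld a$ is the right adjoint of $f$, hence preserves every existing meet. It is moreover an interior operator: monotone by residuation, deflationary since $1\le p$ gives $p\ld a\le 1\ld a=a$, and idempotent since $p\ld(p\ld a)=(pp)\ld a=p\ld a$. Its image is its set of fixed points, namely $p\ld A$ (and $p\ld A=pA$ by Lemma~\ref{lem:char:A/p=Ap}). The maps $a\mapsto a\rd p$ are handled symmetrically. Then for part~(3), Lemma~\ref{lem:char:A/p=Ap} already gives $pA=p\ld A$ and $Ap=A\rd p$, so it suffices to show that $pA$ and $Ap$ are closed under existing meets and joins. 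The image of any closure operator $c$ is closed under existing meets: if each $a_i$ is fixed and $m=\bigwedge_i a_i$ exists, then $c(m)\le c(a_i)=a_i$ for all $i$, so $c(m)\le m$, while $m\le c(m)$ by inflationarity, so $m$ is fixed; and the image of a join-preserving closure operator is closed under existing joins, since if each $a_i$ is fixed and $j=\bigvee_i a_i$ exists then $c(j)=\bigvee_i c(a_i)=\bigvee_i a_i=j$. Applying this with $c$ equal to $a\mapsto pa$ and to $a\mapsto ap$ (closure operators by~(1), join-preserving by~(1)) finishes the proof.

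I expect no serious obstacle: the content is routine once the right framework is chosen. The only point that genuinely needs care is that $\m A$ is merely a poset, so ``join-preserving'', ``meet-preserving'' and ``closed under meets/joins'' must be read as assertions about arbitrary (not just binary) meets and joins that happen to exist in $\m A$, and one cannot appeal to lattice completeness — this is exactly why the argument proceeds through adjunctions rather than through a computation with binary operations. One must also be attentive to invoke the correct one of the two symmetric equalities of Lemma~\ref{lem:char:A/p=Ap} at each step.
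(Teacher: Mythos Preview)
Your proposal is correct and follows essentially the same approach as the paper: both verify the closure/interior operator axioms directly from positivity and idempotence of~$p$, obtain join/meet preservation from the residuation adjunction, and then combine these with Lemma~\ref{lem:char:A/p=Ap} for part~(3). The only minor variation is in part~(3): the paper obtains closure under joins by noting that $pA=p\ld A$ is also the image of an interior operator (using part~(2)), whereas you obtain it directly from the join-preservation of the closure operator $a\mapsto pa$ established in part~(1); both routes are equally valid and elementary.
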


\begin{proof}
Consider the map $\gamma\: a\mapsto pa$. The positivity and idempotence of $p$ imply that $a \le \gamma(a)$ and $\gamma(\gamma(a)) = \gamma(a)$ for all $a\in A$, respectively. And if $a\le b$ then $\gamma(a) \le \gamma(b)$, by the monotonicity of the product. Hence, $\gamma$ is a closure operator. Its image is $\{\gamma(a) : a\in A\} = pA$. One can readily prove that if the join $\bigvee X$ exists for some $X\subset A$, then $\bigvee\{pa : a\in X\}$ also exists and $p\cdot\bigvee X = \bigvee\{pa : a\in X\}$. That is, $\gamma(\bigvee X) = \bigvee\gamma[X]$. The proof of the other claim is analogous.

Consider now the map $\delta\: a\mapsto p\ld a$. The positivity and idempotence of $p$ imply now that $\delta(a)\le a$ and $\delta(\delta(a)) = \delta(a)$, and the monotonicity in the numerator of the left residual implies the monotonicity of $\delta$. Hence, $\delta$ is an interior operator. Again, one can show that if $\bigwedge X$ exists for some $X\subset A$, then $\bigwedge\{p\ld a : a\in X\}$ also exists and $p\ld\bigwedge X = \bigwedge\{p\ld a : a\in X\}$. That is, $\delta(\bigwedge X) = \bigwedge\delta[X]$. The proof of the other claim is analogous.

Recall that the images of closure and interior operators are closed under existing meets and existing joins, respectively. Hence, since $pA = p\ld A$, by Lemma~\ref{lem:char:A/p=Ap}, this set is closed under existing meets and joins, by the previous parts.\qed
\end{proof}

\section{Decompositions of Balanced Residuated Posets}

In what follows, we will denote the term $x\rd x$ by $1_x$. Notice that, as we argued in the proof of Lemma~\ref{lem:IdpA:identities}, in every residuated poset $\m A$, $1_p = p$ for every $p\in\Idp\m A$, namely, $p\rd p = p$. Hence, $1_{1_x}\approx 1_x$ holds in every residuated poset, as well as $1_x\cdot x\approx x$. The term $x\ld x$ has similar properties. A residuated poset is \emph{balanced} if it satisfies the identity $x\ld x \approx x\rd x$. The following proposition gives a number of conditions equivalent to being balanced.

\begin{proposition}\label{prop:equivalent:char:balanced}
The conditions below are equivalent in any residuated poset~$\m A$.
\begin{enumerate}[(1)]
    \item $\mathbf{A}\models x\ld x \approx x\rd x$.
    \item $\lscr pA = A_p$, for every $p\in \Idp\m A$.
    \item $p\ld A =  A\rd p$, for every $p\in \Idp\m A$.
    \item $p\ld a = a$ if and only if $a\rd p = a$, for every $a\in A$ and every $p\in\Idp\m A$.
    \item $pa = a$ if and only if $ap = a$, for every $a\in A$ and every $p\in \Idp\m A$.
    \item $p\ld a = a\rd p$, for every $a\in A$ and every $p\in\Idp\m A$.
    \item $pa = ap$, for every $a\in A$ and every $p\in \Idp{\m A}$.
    \item $\m A\models (x\ld x)y\approx y(x\ld x)$. 
    \item $\m A\models (x\rd x)y\approx y(x\rd x)$.    
\end{enumerate}
If these conditions are satisfied, then $\pair{\Idp\m A,\cdot}$ is a join semilattice with least element $1$ and its induced order is the restriction of the order of $\m A$.
\end{proposition}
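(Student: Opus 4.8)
The plan is to establish the equivalence of (1)--(7) by a short web of implications, then read off (8) and (9), and finally verify the semilattice claim by a direct computation.

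First I would note that (1)--(5) are, for the most part, mere reformulations of one another obtained from the three preceding lemmas, so these implications should go quickly. Specifically: (1)$\Rightarrow$(2) is immediate from the definitions of $A_p$ and $\lscr pA$; (2)$\Rightarrow$(3) holds because Lemma~\ref{char:interdefinability} writes both $A\rd p$ and $p\ld A$ as the union of the blocks $A_q$, respectively $\lscr qA$, taken over all $q\ge p$ in $\Idp\m A$; and for (3)$\Rightarrow$(1) I would use Lemma~\ref{lem:IdpA:identities}: given $a\in A$, put $p=a\ld a$, so $a\in A\rd p$; then (3) yields $a\in p\ld A$, and the maximality clause of Lemma~\ref{lem:IdpA:identities}(1) forces $p\le a\rd a$, i.e. $a\ld a\le a\rd a$, the symmetric argument giving the reverse inequality. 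Lemma~\ref{lem:char:A/p=Ap} then identifies $A\rd p$ and $p\ld A$ with the fixed-point sets $\{a\in A : a\rd p=a\}=\{a\in A : ap=a\}$ and $\{a\in A : p\ld a=a\}=\{a\in A : pa=a\}$, so (3), (4) and (5) are literally the same assertion.

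The step I expect to be the real obstacle is upgrading the one-sided fixed-point symmetry in (5) to genuine commutativity (7); everything else is bookkeeping with the lemmas. The trick I would use: assuming (5), the element $b=pa$ satisfies $pb=p^2a=pa=b$, so by (5) also $bp=b$, i.e. $pap=pa$; dually $c=ap$ satisfies $cp=ap^2=ap=c$, so $pc=c$, i.e. $pap=ap$; hence $pa=pap=ap$. Then (7)$\Rightarrow$(6) is immediate, since by residuation $p\ld a$ and $a\rd p$ are the largest solutions of $px\le a$ and $xp\le a$, and these inequalities coincide once $px=xp$; and (6)$\Rightarrow$(3) is trivial, closing the loop through (1)--(7). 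For (8) and (9) I would invoke the identities $\Idp\m A=\{x\ld x:x\in A\}=\{x\rd x:x\in A\}$ from Lemma~\ref{lem:IdpA:identities}: the identity in (8) holds in $\m A$ exactly when $py=yp$ for every $p\in\Idp\m A$ and $y\in A$, that is, exactly when (7) holds, and similarly for (9).

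Finally, assuming the equivalent conditions, I would check the last claim directly. For $p,q\in\Idp\m A$ we have $1\le pq$ and, using $qp=pq$ from (7), $(pq)(pq)=p(qp)q=p(pq)q=pq$, so $\Idp\m A$ is closed under $\cdot$; the restricted operation is commutative by (7), associative as a restriction of the monoid multiplication, and idempotent, with unit $1\in\Idp\m A$, hence $\pair{\Idp\m A,\cdot}$ is a semilattice with least element $1$. Its induced order is $p\le q\iff pq=q$, and $pq=q$ gives $p=p\cdot1\le pq=q$, while $p\le q$ in $\m A$ gives $q=1\cdot q\le pq\le q$ and so $pq=q$; thus the semilattice order is exactly the restriction of the order of $\m A$. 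No step beyond the $pap$ sandwich should present any real difficulty.
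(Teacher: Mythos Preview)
Your proposal is correct and follows essentially the same route as the paper's proof: the equivalences among (1)--(5) are handled via Lemmas~\ref{lem:char:A/p=Ap}--\ref{char:interdefinability}, the passage to (7) uses exactly the same $pap$ sandwich argument (the paper writes it as (5)$\Leftrightarrow$(7)), and (8), (9) are tied to (7) through $\Idp\m A=\{x\ld x:x\in A\}=\{x\rd x:x\in A\}$. The only cosmetic differences are that the paper organizes the argument as a list of biconditionals rather than a cycle, and it derives (6) from (4) via the residual analogue of the sandwich ($p\ld(a\rd p)=p\ld a$ and $(p\ld a)\rd p=a\rd p$) rather than from (7); your (7)$\Rightarrow$(6) via the ``largest solution'' characterization is an equally short alternative.
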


\begin{proof}
\begin{itemize}[align=left, left=0pt .. \parindent]
\item[(1) $\Leftrightarrow$ (2)] This follows from the definitions of $\lscr pA$ and $A_p$ and Lemma~\ref{lem:IdpA:identities}.

\item[(2) $\Leftrightarrow$ (3)] This is a consequence of Lemma~\ref{char:interdefinability}.

\item[(3) $\Leftrightarrow$ (4) $\Leftrightarrow$ (5)] This follows from Lemma~\ref{lem:char:A/p=Ap}.

\item[(4) $\Leftrightarrow$ (6)] From~(4) and the fact that $p\ld (p\ld a) = p\ld a$ and $(a\rd p)\rd p = a\rd p$, we deduce that $p\ld a\rd p = p\ld a$ and $p\ld a\rd p = a\rd p$, respectively. Therefore, $p\ld a = a\rd p$. The reverse implication is immediate.

\item[(5) $\Leftrightarrow$ (7)] From~(5) and the fact that $ppa = pa$ and $app = ap$, we deduce that $pap = pa$ and $pap = ap$, respectively. Therefore, $pa = ap$. The reverse implication is immediate.

\item[(7) $\Leftrightarrow$ (8)] This follows from Lemma~\ref{lem:IdpA:identities}.

\item[(7) $\Leftrightarrow$ (9)] This also follows from Lemma~\ref{lem:IdpA:identities}.
\end{itemize}

\medskip

By part~7, we have that $\pair{\Idp\m A,\cdot}$ is a join semilattice. Moreover, notice that for every $p,q\in\Idp\m A$ we have that $p\le q$ if and only if $p\cdot q = q$, and thus the order induced by $\pair{\Idp\m A,\cdot}$ is the restriction of the order of $\m A$. Hence, the least element of $\pair{\Idp\m A,\cdot}$ is~$1$.\qed
\end{proof}

\begin{lemma}\label{lem:inequalities:balanced}
The following inequations hold in all balanced residuated posets.
\[
1_x \le 1_{xy},\quad
1_x \le 1_{yx},\quad
1_x \le 1_{x\rd y},\quad
1_x \le 1_{y\ld x}.
\]
\end{lemma}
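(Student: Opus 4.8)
The plan is to exploit the maximality characterization from Lemma~\ref{lem:IdpA:identities}(1): for every $z\in A$, the element $1_z=z\rd z$ is the \emph{largest} positive idempotent $p$ satisfying $pz\le z$. Since $1_x$ is itself a positive idempotent, it will therefore suffice, for each of the four inequalities, to verify that $1_x\cdot t\le t$, where $t$ ranges over $xy$, $yx$, $x\rd y$, and $y\ld x$; the conclusion $1_x\le t\rd t=1_t$ then follows at once. I would first record two facts to be used throughout: that $1_x\cdot x=x$ (the inequality $1_x\cdot x\le x$ is residuation applied to $1_x\le x\rd x$, and $x=1\cdot x\le 1_x\cdot x$ because $1\le 1_x$), and that $1_x$ commutes with every element of $A$, which is where balancedness enters, via Proposition~\ref{prop:equivalent:char:balanced}(7).

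Next I would dispatch the four cases. For $t=xy$: $1_x(xy)=(1_x x)y=xy$. For $t=yx$: $1_x(yx)=y(1_x x)=yx$, commuting $1_x$ past $y$. For $t=x\rd y$: by residuation, $1_x(x\rd y)\le x\rd y$ is equivalent to $1_x(x\rd y)y\le x$, and since $(x\rd y)y\le x$, monotonicity of the product together with $1_x x=x$ gives $1_x(x\rd y)y=1_x\big((x\rd y)y\big)\le 1_x x=x$. For $t=y\ld x$: residuation reduces $1_x(y\ld x)\le y\ld x$ to $y\,1_x(y\ld x)\le x$, and commuting $1_x$ past $y$ and using $y(y\ld x)\le x$ yields $y\,1_x(y\ld x)=1_x\big(y(y\ld x)\big)\le 1_x x=x$. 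In every case $1_x$ is a positive idempotent with $1_x t\le t$, so Lemma~\ref{lem:IdpA:identities}(1) delivers $1_x\le 1_t$.

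I do not expect a genuine obstacle here: the only subtlety is choosing, for each term $t$, the right instance of residuation so that $1_x t\le t$ becomes a consequence of $1_x x=x$ and the canonical inequalities $(x\rd y)y\le x$ and $y(y\ld x)\le x$. The one place balancedness is really needed is to move $1_x$ past $y$ in the $yx$ and $y\ld x$ cases; the first and third inequalities, by contrast, hold in every residuated poset.
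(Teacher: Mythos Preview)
Your proof is correct and follows essentially the same approach as the paper's: both verify $1_x\cdot t\le t$ for each of the four terms $t$ and then deduce $1_x\le t\rd t$ by residuation, invoking balancedness (commutativity of $1_x$ with arbitrary elements) precisely in the $yx$ and $y\ld x$ cases. The only cosmetic difference is that for $t=x\rd y$ the paper reaches $1_x\le(x\rd y)\rd(x\rd y)$ via antitonicity of $\rd$ in the denominator together with the identity $a\rd(bc)=(a\rd c)\rd b$, rather than by the direct residuation argument you give.
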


\begin{proof}
For all $a,b\in A$, the equality $1_a\cdot ab = ab$ implies that $1_a\le ab\rd ab = 1_{ab}$. And, analogously, $1_a\cdot ba = b\cdot 1_a\cdot a = ba$ implies that $1_a\le ba\rd ba = 1_{ba}$. Also, the inequality $(a\rd b)b \le a$ implies that
\[
1_a = a\rd a \le a\rd ((a\rd b)b) = (a\rd b)\rd (a\rd b) = 1_{a\rd b}.
\]
The proof of the last inequation is similar.\qed
\end{proof}

As we have seen in Proposition~\ref{prop:equivalent:char:balanced}, the two partitions $\{\lscr pA : p\in\Idp\m A\}$ and $\{A_p : p\in\Idp\m A\}$ coincide if $\m A$ is balanced. We shall now investigate the case where all these equivalence classes are universes of subalgebras of the algebra~$\m A$. Obviously, the constant $1$ will lie in only one of these components, so the actual question is when the equivalence classes are the universes of algebras of the same type, such that their constant-free reducts are subalgebras of the constant-free reduct of $\m A$. We need to impose conditions which ensure that each one of these classes is closed under products and residuals:
\begin{enumerate}[(H1),leftmargin=*]
    \item\label{H1} $1_x \approx 1_y \implies 1_{xy} \approx 1_x$,
    \item\label{H2} $1_x \approx 1_y \implies 1_{x\rd y} \approx 1_x$,
    \item\label{H3} $1_x \approx 1_y \implies 1_{x\ld y} \approx 1_x$.
\end{enumerate}

These conditions are not completely independent, as we now show.

\begin{proposition}\label{prop:H2:iff:H3:imply:H1}
In the po-variety of balanced residuated posets, the quasiequations~\ref{H2} and~\ref{H3} are equivalent, and both imply~\ref{H1}.
\end{proposition}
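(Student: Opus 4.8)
The plan is to handle the equivalence \ref{H2}$\Leftrightarrow$\ref{H3} by an order-duality argument, and then to derive \ref{H1} from \ref{H2} together with \ref{H3}. For the equivalence, recall that every residuated poset $\m A$ has an \emph{opposite} $\m A^{\op}$, with $a\cdot^{\op}b := ba$, $a\ld^{\op}b := b\rd a$ and $a\rd^{\op}b := b\ld a$; this is again a residuated poset, and it is balanced exactly when $\m A$ is, because the monoid laws, the residuation law and $x\ld x\approx x\rd x$ are all self-dual. In a balanced residuated poset the term $1_x = x\rd x$ is self-dual as well, since $x\rd^{\op}x = x\ld x = x\rd x$. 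Hence interpreting \ref{H2} in $\m A^{\op}$ gives precisely \ref{H3} interpreted in $\m A$, up to interchanging the two variables, which is harmless as the premise $1_x\approx 1_y$ is symmetric. Since the po-variety of balanced residuated posets is closed under $(-)^{\op}$, it follows that \ref{H2} holds throughout it iff \ref{H3} does; it therefore remains to show that \ref{H2} and \ref{H3} jointly imply \ref{H1}.

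For this I would first record two facts. (a) Beyond Lemma~\ref{lem:inequalities:balanced} one also has $1_y\le 1_{x\rd y}$ and $1_x\le 1_{x\ld y}$: indeed $(x\rd y)\rd 1_y = x\rd(1_y\cdot y) = x\rd y$ and, symmetrically, $1_x\ld(x\ld y) = (x\cdot 1_x)\ld y = x\ld y$, so by Lemma~\ref{lem:char:A/p=Ap} the elements $x\rd y$ and $x\ld y$ lie in $A\rd 1_y$ and $1_x\ld A$ respectively, whence the inequalities follow from Lemma~\ref{char:interdefinability}. Thus $1_{t(x,y)}\ge 1_x\vee 1_y$ for every binary term $t$ over $\cdot,\rd,\ld$. (b) Lemmas~\ref{lem:char:A/p=Ap} and~\ref{char:interdefinability} yield, for $q\in\Idp\m A$ and $c\in A$, the equivalence $q\le 1_c\iff cq = c$; hence, when $1_a = 1_b = p$, the identity $1_{ab}\approx p$ reduces to the single inequality $1_{ab}\cdot a\le a$.

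Finally I would localize to the unit. For $p\in\Idp\m A$ the set $pA$, with the inherited order and operations and with $p$ as unit, is a balanced residuated poset $\m A_p^{+}$; it is a subalgebra of $\m A$ in the constant-free language and so satisfies the constant-free quasiequations \ref{H1}--\ref{H3}, and \ref{H1} for $\m A$ is exactly the assertion that in each $\m A_p^{+}$ the component of the unit is closed under multiplication. So it suffices to prove: if $\m B$ is a balanced residuated poset with unit $1$ satisfying \ref{H2} and \ref{H3}, then $B_1 = \{c : 1_c = 1\}$ is closed under $\cdot$. Arguing by contraposition, suppose $a,b\in B_1$ with $1_{ab}>1$, which is strict by (a). The diagonal situation is immediate: if $1_{c^2}>1$ for some $c\in B_1$ (say $c\in\{a,b\}$ already works), then the pair $(1_c,c)$ refutes \ref{H2}, because $1_c\rd c = (c\rd c)\rd c = c\rd c^2$ gives $1_{1_c\rd c}\ge 1_{c^2}>1 = 1_{1_c}$ while $1_{1_c}\approx 1_c$ holds. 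The crux — and, I expect, the only genuinely hard step — is to reduce the general failure (where $a^2,b^2\in B_1$) to such a diagonal one, or else to produce directly two elements of $B_1$ whose residual leaves $B_1$; this is where one must really use that $B_1$ is already closed under both residuals together with the nesting of the components from Lemma~\ref{char:interdefinability}.
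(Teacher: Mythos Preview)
The proposal has genuine gaps in both parts.

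\medskip

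\textbf{Equivalence of \ref{H2} and \ref{H3}.} Your duality argument only establishes that $\m A\models\ref{H2}$ iff $\m A^{\op}\models\ref{H3}$. This is not the same as $\m A\models\ref{H2}$ iff $\m A\models\ref{H3}$: passing to the opposite swaps the two quasiequations but it also swaps the algebra, and you never come back to the original~$\m A$. Closure of the class of balanced residuated posets under $(-)^{\op}$ does not rescue this; what you would need is that the class of \emph{models of \ref{H2}} is itself closed under $(-)^{\op}$, and that is exactly the statement to be proved. The paper instead gives a short direct derivation: assuming \ref{H2} and $1_a=1_b$, it computes
\[
1_{a\ld b}\le 1_{(a\ld b)\rd b}=1_{a\ld 1_b}=1_{a\ld 1_a}=1_{1_a\rd a}=1_a=1_b\le 1_{a\ld b},
\]
using Lemma~\ref{lem:inequalities:balanced} for the two inequalities and a single application of \ref{H2} to the pair $(1_a,a)$ (where $1_{1_a}=1_a$) for the penultimate equality.

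\medskip

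\textbf{Deriving \ref{H1}.} You explicitly stop at what you call the ``crux''. Your localization to $\m A_p^{+}=pA$ and facts~(a) and~(b) are correct, and the diagonal case ($1_{c^2}>1$ for some $c\in B_1$) does refute \ref{H2} as you say. But the reduction of the general failure $1_{ab}>1$ (with $a^2,b^2\in B_1$) to a diagonal one is never carried out, and nothing in your outline indicates how closure of $B_1$ under residuals alone would force it. The paper avoids the case split entirely with a direct inequality chain: from $a(a\ld 1_a)a\le a$ one obtains $(a\ld 1_a)ab\le b$, hence $ab\le (a\ld 1_a)\ld b$, and then
\[
1_{ab}\le \big((a\ld 1_a)\ld b\big)\rd ab=(a\ld 1_a)\ld 1_b\rd a=(1_a\rd a)\ld(1_a\rd a)=1_{1_a\rd a}=1_a,
\]
the last step again by \ref{H2} applied to $(1_a,a)$. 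No contraposition or diagonal reduction is needed.
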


\begin{proof}
In order to prove that~\ref{H2} implies~\ref{H3}, suppose that $a,b\in A$ are such that $1_a = 1_b$. Then,
\[
1_{a\ld b} \le 1_{a\ld b\rd b} = 1_{a\ld a\rd a} = 1_{1_a\rd a} = 1_a = 1_b \le 1_{a\ld b},
\]
where the inequalities follow from Lemma~\ref{lem:inequalities:balanced}, and the third equality follows from~\ref{H2}, since $1_{1_a} = 1_a$. The proof that~\ref{H3} implies~\ref{H2} is analogous.

In order to prove that~\ref{H2} implies~\ref{H1}, suppose again that $a,b\in A$ are such that $1_a = 1_b$. Since $a(a\ld 1_a)a \le 1_a\cdot a = a$, we deduce that $(a\ld 1_a)a \le a\ld a = 1_a = 1_b = b\rd b$, and therefore $(a\ld 1_a)ab \le b$. Hence, $ab \le (a\ld 1_a)\ld b$. Thus,
\begin{align*}
1_{ab} &= ab\rd ab \le (a\ld 1_a)\ld b\rd ab = (a\ld 1_a)\ld (b\rd b) \rd a = (a\ld 1_a)\ld 1_b \rd a \\ 
&= (a\ld a\rd a)\ld (1_a \rd a) = (1_a\rd a)\ld (1_a\rd a) = 1_{1_a\rd a} = 1_a \le 1_{ab},
\end{align*}
where the last equalities follow from~\ref{H2} and the fact that $1_{1_a}=1_a$ and the last inequality follows from Lemma~\ref{lem:inequalities:balanced}.\qed
\end{proof}

For every balanced residuated poset $\m A$ and every $p\in\Idp\m A$, we can consider the partial structure $\m A_p = \pair{A_p,\le_p,\cdot_p,\ld_p,\rd_p,1_p}$, where the relation $\le_p$ and each of its operations are the restrictions of the corresponding operations of $\m A$. We call $\m A_p$ the \emph{$p$-component} of $\m A$, for every $p\in\Idp\m A$. Conditions~\ref{H1}--\ref{H3} ensure that all the operations are total in all the components.

\begin{proposition}\label{prop:semilattice:sum:decomp:H123}
Every balanced residuated poset satisfying~\ref{H1}--\ref{H3} decomposes as a family of disjoint residuated posets $\{\m A_p : p\in\Idp\m A\}$ such that the constant-free reduct of $\m A_p$ is a subalgebra of the constant-free reduct of $\m A$ and $1_p$ is the only positive idempotent of $\m A_p$. 
\end{proposition}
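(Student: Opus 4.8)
The plan is to check directly, for each $p\in\Idp\m A$, that $\m A_p$ is a residuated poset, that its constant-free reduct sits inside that of $\m A$ as a subalgebra, and that $p$ is its unique positive idempotent. Disjointness of the family and the fact that $A=\biguplus_{p\in\Idp\m A}A_p$ are already provided by Lemma~\ref{lem:IdpA:identities}; moreover, since $\m A$ is balanced, Proposition~\ref{prop:equivalent:char:balanced} gives $A_p=\lscr pA$, so the classes $A_p$ are simultaneously the fibres of $x\mapsto x\ld x$ and of $x\mapsto x\rd x$, and every $a\in A_p$ satisfies $a\ld a=a\rd a=p$, that is, $1_a=p$.

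The first and only substantial step is that the fundamental operations are total on $A_p$. Fix $a,b\in A_p$, so $1_a=1_b=p$. Then \ref{H1} yields $1_{ab}=1_a=p$, \ref{H2} yields $1_{a\rd b}=1_a=p$, and \ref{H3} yields $1_{a\ld b}=1_a=p$; hence $ab,\ a\rd b,\ a\ld b\in A_p$. Since $p\in\Idp\m A$ we have $p\rd p=p$ and $p\ld p=p$ (the remarks opening this section), so the designated constant $1_p=p\rd p=p$ also lies in $A_p$. Because $\le_p$ and the operations of $\m A_p$ are by definition the restrictions of those of $\m A$, the closure just established shows that $\pair{A_p,\le_p,\cdot_p,\ld_p,\rd_p}$ is a subalgebra of the constant-free reduct of $\m A$, which is one of the required assertions.

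Next I would run through the residuated-poset axioms for $\m A_p$. Reflexivity, antisymmetry, and transitivity of $\le_p$, and associativity of $\cdot_p$, are inherited from $\m A$. The one place balancedness is needed is to see that $p$ is a \emph{two-sided} unit on $A_p$: for $a\in A_p$ we have $a\rd a=a\ld a=p$, and since $(x\rd x)\cdot x\approx x$ and $x\cdot(x\ld x)\approx x$ hold in every residuated poset, $pa=(a\rd a)a=a$ and $ap=a(a\ld a)=a$ (equivalently, use Proposition~\ref{prop:equivalent:char:balanced}(5) together with Lemma~\ref{lem:char:A/p=Ap}). Finally, since all the operations, the order, and their values agree with those of $\m A$ on $A_p$, the residuation biconditional $a\cdot_p b\le_p c\iff a\le_p c\rd_p b\iff b\le_p a\ld_p c$ for $a,b,c\in A_p$ is merely the restriction of the residuation law of $\m A$, and therefore holds.

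It remains to show that $p$ is the only positive idempotent of $\m A_p$. If $q\in A_p$ satisfies $1_p\le_p q$ and $q\cdot_p q=q$, then $1\le p\le q$ and $q^2=q$ in $\m A$, so $q\in\Idp\m A$; hence $q\ld q=q$ by Lemma~\ref{lem:IdpA:identities}, while $q\in A_p$ forces $q\ld q=p$, whence $q=p$. The main obstacle is really just the totality step of the second paragraph, which is exactly what \ref{H1}--\ref{H3} were introduced to secure; once that is in hand, everything else is either inherited from $\m A$ or an immediate consequence of balancedness and Lemma~\ref{lem:IdpA:identities}.
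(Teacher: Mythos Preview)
Your proof is correct and follows essentially the same approach as the paper's: use \ref{H1}--\ref{H3} to obtain closure of each $A_p$ under the operations, check that $p$ acts as a two-sided identity on $A_p$, and note that the remaining residuated-poset axioms are inherited from $\m A$. Your argument is in fact more detailed than the paper's very terse proof (which leaves the residuation law, the two-sidedness of the unit, and the uniqueness of the positive idempotent implicit, the latter being handled in the paragraph following the proof rather than inside it).
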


\begin{proof}
The partition $\{A_p : p\in\Idp\m A\}$ is determined by the equivalence relation given by $a\equiv b$ if and only if $1_a = 1_b$. Therefore, the quasiequations~\ref{H1}--\ref{H3} express the conditions that every equivalence class $A_p$ is closed under products and residuals. Moreover, for every $a\in A_p$ we have that $1_p\cdot a = pa = a$, and $\m A_p$ inherits from $\m A$ all the properties to be a residuated poset.\qed
\end{proof}

The residuated posets $\m A_p$ are \emph{integrally closed} \cite{IntegrallyclosedRL} by construction: they satisfy the equation $x \ld x \approx 1$, or equivalently the equation $x \rd x \approx 1$. Both of these equations are equivalent to $1$ being the only positive idempotent, since $a \ld a$ and $a \rd a$ are positive idempotents for each element~$a$, and conversely $a \ld a = a = a \rd a$ for each positive idempotent~$a$. The equivalence between the equations $x \ld x \approx 1$ and $x \rd x \approx 1$ also follows directly from the equations $(x \rd x) \ld (x \rd x) \approx x \rd x$ and $x \ld x \approx (x \ld x) \rd (x \ld x)$ which hold in all residuated posets. Notice that an integrally closed residuated poset cannot be non-trivially decomposed in the above manner.

\begin{example}\label{ex:RP:H123}
The left poset $\pair{A,\le}$ of Fig.~\ref{fig:H123:but:not:H456} can be equipped with a commutative idempotent multiplication, namely, the meet operation of the poset $\langle A,\sqsubseteq\rangle$ on the right. One can check that this multiplication preserves all joins of $\pair{A,\le}$ and therefore it is residuated. In this way, we obtain a commutative, and therefore balanced, residuated poset $\m A$, where $\Idp\m A = \{1,p,q\}$ and $A_1 = \{1,a\}$, $A_p = \{p\}$, and $A_q = \{q,b,\bot\}$. These sets are indeed closed under residuals, and therefore $\m A$ satisfies~\ref{H1}--\ref{H3}, by Proposition~\ref{prop:H2:iff:H3:imply:H1}.
\end{example}

\begin{figure}[ht]\centering
\begin{tikzpicture}[baseline=0pt]
\node at (0,-2)[n]{$\le$};
\node(4) at (0,3)[label=left:$q$]{};
\node(3) at (0,2)[label=left:$p$]{} edge (4);
\node(2) at (1,1)[label=right:$b$]{} edge (3);
\node(1) at (-1,1)[label=left:$1$]{} edge (3);
\node(0) at (0,0)[label=right:$a$]{} edge (1) edge (2);
\node(-1) at (0,-1)[label=right:$\bot$]{} edge (0);
\end{tikzpicture}
\qquad\qquad
\begin{tikzpicture}[baseline=0pt]
\node at (0,-2)[n]{$\sle$};
\node(4) at (0,3)[label=right:$1$]{};
\node(3) at (1,2)[label=right:$p$]{} edge (4);
\node(2) at (-1,1.5)[label=left:$a$]{} edge (4);
\node(1) at (1,1)[label=right:$q$]{} edge (3);
\node(0) at (0,0)[label=right:$b$]{} edge (1) edge (2);
\node(-1) at (0,-1)[label=right:$\bot$]{} edge (0);
\end{tikzpicture}

\caption{Residuated poset satisfying~\ref{H1}--\ref{H3}.}
\label{fig:H123:but:not:H456}
\end{figure}
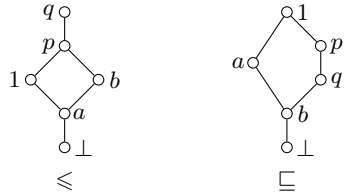

\section{P\l{}onka Sums of Directed Systems of Metamorphisms}

In the last section we obtained a ``decomposition result'', establishing that under certain minimal conditions, balanced residuated posets decompose as families of subalgebras without nontrivial positive idempotents. If we want to obtain a ``composition result'', everything points towards the use of P\l{}onka-style constructions. However, it turns out that the use of a single partition function is not sufficient to treat all the operations at once.

Let us start with the monoidal part. A natural candidate for a partition function $\odot\: A^2\to A$ that could take care of the monoidal reduct of a balanced residuated poset~$\m A$ is
\[
a\odot b = 1_b\cdot a.
\]
For this to be a partition function for the monoidal reduct of $\m A$, it should in particular satisfy~\ref{PF5} for the multiplication and for the particular value $b = 1$. That is, $1_{a_1\cdot a_2}\cdot 1 = 1_{a_2}\cdot 1_{a_1}\cdot 1$. We should then constrain ourselves to the po\nbd-subvariety of balanced residuated posets satisfying the equation
\begin{enumerate}[resume*]
    \item\label{H4} $1_x\cdot 1_y \approx 1_{xy}$.
\end{enumerate}
Notice that every balanced residuated poset $\m A$ satisfying~\ref{H4} also satisfies~\ref{H1}. Indeed, if $a,b\in A$ are such that $1_a = 1_b$, then $1_{ab} = 1_a\cdot 1_b = 1_a\cdot 1_a = 1_a$, by a direct application of~\ref{H4}. Therefore, in this case every equivalence class $A_p$ is closed under products, for every $p\in\Idp\m A$.

\begin{proposition}\label{prop:H4:monoidal:decomposition}
For every balanced residuated poset satisfying~\ref{H4}, the operation $\odot\: A^2\to A$ determined by $a\odot b = 1_b\cdot a$ is a partition function for the monoidal reduct of $\m A$. The corresponding partition of $A$ is $\{A_p : p\in\Idp\m A\}$ and the associated family of maps is $\Phi = \{\varphi_{pq}\: A_p \to A_q : p\le q\}$ given by $\varphi_{pq}(a) = qa$. All these are monoidal homomorphisms between the appropriate algebras and the monoidal reduct of $\m A$ is the P\l{}onka sum of the family $\Phi$.
\end{proposition}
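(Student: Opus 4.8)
The plan is to verify the six partition-function axioms \ref{PF1}--\ref{PF6} for $\odot$ directly, using \ref{H4} and balancedness, and then to invoke P\l{}onka's decomposition theorem (Theorem~\ref{thm:Plonka:Theorem}) to extract the partition, the indexing semilattice, and the transition maps. First I would dispatch the band axioms: \ref{PF1} is $1_a\cdot a\approx a$, which holds in every residuated poset; for \ref{PF2} and \ref{PF3} I would compute $a\odot(b\odot c)=1_{1_c\cdot b}\cdot a$ and, using Lemma~\ref{lem:inequalities:balanced} together with \ref{H4} (which gives $1_{1_c\cdot b}=1_{1_c}\cdot 1_b=1_c\cdot 1_b$), rewrite this as $1_c\cdot 1_b\cdot a$; symmetrically $(a\odot b)\odot c=1_c\cdot 1_b\cdot a$ and $a\odot(c\odot b)=1_b\cdot 1_c\cdot a$, so associativity and the left-normal law both reduce to the commutativity of positive idempotents, which is Proposition~\ref{prop:equivalent:char:balanced}(8)--(9). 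Axiom \ref{PF6} is $b\odot 1\approx 1_1\cdot b=1\cdot b=b$ since $1_1=1$.

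Next I would handle the two genuinely operation-specific axioms, \ref{PF4} and \ref{PF5}, only for the binary operation $\cdot$ and the constant $1$ (these are the only nonunary symbols in the monoidal reduct). For \ref{PF4}: $(a_1\cdot a_2)\odot b=1_b\cdot a_1\cdot a_2$, while $(a_1\odot b)\cdot(a_2\odot b)=1_b\cdot a_1\cdot 1_b\cdot a_2=1_b\cdot a_1\cdot a_2$, using idempotence of $1_b$ and the fact that $1_b$ commutes with everything that it can be moved past—more carefully, $1_b\cdot a_1\cdot 1_b=1_b\cdot 1_b\cdot a_1=1_b\cdot a_1$ needs $1_b$ to commute with $a_1$, which is false in general, so instead I would argue $a_1\cdot 1_b\le a_1$ is wrong too; the correct route is: $1_b\cdot a_1\cdot 1_b\cdot a_2$, and since $1_b\cdot a_2$ lies in the $q$-component for some $q\ge 1_b$ (by Lemma~\ref{lem:inequalities:balanced}) and $1_b$ acts as identity there, we get $1_b(a_1\cdot 1_b\cdot a_2)$; then applying \ref{H4}-driven absorption $1_b\cdot 1_b = 1_b$ after pulling $1_b$ leftward via $a_1 \cdot 1_b \le ?$—this is the delicate point. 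Cleaner: note $1_b\cdot x=x$ for $x\in A_q$ with $q\ge 1_b$; since $1_{a_2\odot b}=1_b\cdot 1_{a_2}\ge 1_b$, we have $1_b\cdot(a_1\odot(a_2\odot b))$ collapses appropriately. For \ref{PF5}: $b\odot(a_1\cdot a_2)=1_{a_1a_2}\cdot b=1_{a_1}\cdot 1_{a_2}\cdot b$ by \ref{H4}, and $b\odot a_1\odot a_2=1_{a_2}\cdot 1_{a_1}\cdot b=1_{a_1}\cdot 1_{a_2}\cdot b$ by commutativity of positive idempotents; for the constant, $b\odot 1=1_1\cdot b=b$, consistent with \ref{PF6}.

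Once $\odot$ is confirmed to be a partition function for the monoidal reduct, I would apply Theorem~\ref{thm:Plonka:Theorem}: the P\l{}onka partition identifies $a,b$ when $a\odot b=a$ and $b\odot a=b$, i.e. $1_b\cdot a=a$ and $1_a\cdot b=b$; by Lemma~\ref{lem:char:A/p=Ap} this says $a\in 1_b\ld A=1_b A$ and $b\in 1_a A$, and by Lemma~\ref{lem:IdpA:identities} (which gives $1_a$ as the largest $p$ with $a\in pA$) this forces $1_a=1_b$, so the partition is exactly $\{A_p:p\in\Idp\m A\}$. The induced order on indices is $p\le q\iff q\odot a=q$ for some $a\in A_p$, $q\in A_q$, i.e. $1_q\cdot a=q$; unwinding, this matches $p\le q$ in $\Idp\m A$, and the transition map $\varphi_{pq}(a)=a\odot b$ for $b\in A_q$ equals $1_b\cdot a=q\cdot a$ since $1_b=q$. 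Theorem~\ref{thm:Plonka:Theorem}(3)--(5) then yields that each $A_p$ carries a monoid whose constant-free reduct is a submonoid of the constant-free reduct of $\m A$, that the $\varphi_{pq}$ are monoid homomorphisms, and that $\m A$ is their P\l{}onka sum. The main obstacle is the careful verification of \ref{PF4} for multiplication: unlike \ref{PF5}, it is not an immediate consequence of \ref{H4} plus commutativity of idempotents, and one must exploit that $1_b$ acts as a two-sided identity on every component $A_q$ with $q\ge 1_b$ (Proposition~\ref{prop:equivalent:char:balanced}(5),(7)), together with Lemma~\ref{lem:inequalities:balanced} to locate the relevant products in such components.
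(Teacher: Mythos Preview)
Your overall plan is exactly the paper's: verify \ref{PF1}--\ref{PF6} for $\odot$ and then invoke Theorem~\ref{thm:Plonka:Theorem}. The identifications of the partition, the order on indices, and the maps $\varphi_{pq}$ are also handled just as in the paper.

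The one genuine problem is your treatment of \ref{PF4}. You write that ``$1_b\cdot a_1\cdot 1_b=1_b\cdot 1_b\cdot a_1$ needs $1_b$ to commute with $a_1$, which is false in general,'' and then embark on a workaround via components that you never quite close. But this premise is wrong: in a \emph{balanced} residuated poset, every positive idempotent commutes with \emph{every} element, not just with other idempotents. That is precisely condition~(7) of Proposition~\ref{prop:equivalent:char:balanced}, which is equivalent to balancedness. Since $1_b\in\Idp\m A$, we have $a_1\cdot 1_b = 1_b\cdot a_1$, and \ref{PF4} collapses to the one-line computation the paper gives:
\[
(a_1\cdot a_2)\odot b = 1_b\cdot a_1\cdot a_2 = 1_b\cdot 1_b\cdot a_1\cdot a_2 = 1_b\cdot a_1\cdot 1_b\cdot a_2 = (a_1\odot b)\cdot(a_2\odot b).
\]
You even cite Proposition~\ref{prop:equivalent:char:balanced}(7) at the end, so the needed fact is already in your toolkit; you simply failed to recognize that it applies to $1_b$ and an arbitrary $a_1$. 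Once you use it, the ``main obstacle'' you flag disappears and no component-chasing is needed.
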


\begin{proof}
\ref{PF1}~holds because $a\odot a = 1_a\cdot a = a$. As for~\ref{PF2}, notice that 
\[
a\odot (b\odot c) = 1_{1_c\cdot b}\cdot a = 1_{1_c}\cdot 1_b\cdot a = 1_c\cdot 1_b\cdot a = (a\odot b)\odot c.
\]
Concerning~\ref{PF3},
\[
a\odot b\odot c = 1_c\cdot 1_b\cdot a = 1_b\cdot 1_c\cdot a = a\odot c\odot b.
\]
As for~\ref{PF4}, we only need to check that the property holds for $\sigma = {\cdot}$. Indeed,
\[
(a_1\cdot a_2)\odot b = 1_b\cdot (a_1\cdot a_2) = 1_b\cdot 1_b\cdot a_1\cdot a_2 = 1_b\cdot a_1 \cdot 1_b\cdot a_2 = (a_1\odot b)\cdot (a_2\odot b).
\]
Concerning~\ref{PF5}, $b\odot (a_1\cdot a_2) = 1_{a_1\cdot a_2}\cdot b = 1_{a_1}\cdot 1_{a_2}\cdot b = 1_{a_2}\cdot 1_{a_1}\cdot b = b\odot a_1\odot a_2$. And for~\ref{PF6}, we just need to check that $a\odot 1 = 1_1\cdot a = 1\cdot a = a$.

Two elements $a,b\in A$ are equivalent, according to the partition function $\odot$ if and only if $a\odot b = a$ and $b\odot a = b$, that is, $1_b\cdot a = a$ and $1_a\cdot b = b$, and in particular $1_b\le 1_a$ and $1_a\le 1_b$, that is, $1_a = 1_b$. On the other hand, if $1_a=1_b$, then $a\odot b = 1_b\cdot a = 1_a\cdot a = a$ and analogously, $b\odot a = b$. Notice also that if there is some $a\in A_p$ and $b\in A_q$ so that $b\odot a = b$, then $1_a\cdot b =b$, whence $p = 1_a \le b\rd b = 1_b = q$. Hence, the partition corresponding to $\odot$ is $\{A_p : p\in\Idp\m A\}$ and the order of the set of indices $\Idp\m A$ determined by the partition function $\odot$ coincides with the restriction of the order of $\m A$.

Finally, given positive idempotents $p\le q$, the map $\varphi_{pq}\: A_p\to A_q$ determined by the partition function $\odot$ is given by $\varphi_{pq}(a) = a\odot q = 1_q\cdot a = qa$. The rest follows from Theorem~\ref{thm:Plonka:Theorem}.\qed
\end{proof}

The residuals of balanced residuated posets resist such a direct approach. Indeed, the most natural choice for a partition function $\otimes\: A^2\to A$ that could take care of the residuals would be
\[
a\otimes b = 1_b\ld a = a\rd 1_b.
\]
For this to be a partition function with respect to the residuals, $\m A$ should satisfy the equation $1_x\ld (1\ld 1) = (1_x\ld 1)\ld (1_x\ld 1)$, as a particular case of~\ref{PF4}. However, the next lemma shows that the only balanced residuated posets that satisfy this equation are those with no nontrivial positive idempotents. Therefore, this condition is overly restrictive and will need to be relaxed.

\begin{lemma}
A balanced residuated poset satisfies $1_x\ld 1 \approx (1_x\ld 1)\ld (1_x\ld 1)$ if and only if it satisfies $1_x \approx 1$.   
\end{lemma}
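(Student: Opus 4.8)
\medskip

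The plan is to prove the two implications separately, the ``if'' direction being immediate and the ``only if'' direction carrying the content. For the ``if'' part I would assume $\m A\models 1_x\approx 1$ and simply observe that the term $1_x\ld 1$ then evaluates identically to $1\ld 1$; since $1\ld 1\approx 1$ holds in every residuated monoid (by residuation, $1\le 1\ld 1\iff 1\cdot 1\le 1$, and $1\ld 1\le 1\iff 1\cdot(1\ld 1)\le 1$), both sides of the displayed identity collapse to $1$, so the identity holds.

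For the ``only if'' direction, the key remark is that the right-hand side $(1_x\ld 1)\ld(1_x\ld 1)$ has the shape $t\ld t$, and by Lemma~\ref{lem:IdpA:identities} every term of this shape takes values in $\Idp\m A$, since $\{a\ld a : a\in A\}=\Idp\m A$. Hence the hypothesised identity forces $1_x\ld 1$ to be a positive idempotent under every assignment of $x$. Using the other equality of Lemma~\ref{lem:IdpA:identities}, namely $\Idp\m A=\{1_x : x\in A\}=\{x\rd x : x\in A\}$, this says exactly that $p\ld 1\in\Idp\m A$ for every $p\in\Idp\m A$.

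It then remains to deduce $\Idp\m A=\{1\}$, equivalently $\m A\models 1_x\approx 1$. Here I would argue that since $p\ld 1$ is positive we have $1\le p\ld 1$, and residuation gives $p=p\cdot 1\le 1$; combined with $1\le p$ (which holds because $p$ is positive) this yields $p=1$ by antisymmetry. I do not expect a genuine obstacle: the only points needing care are the auxiliary facts $1\ld 1\approx 1$ and $1\le x\rd x$ in an arbitrary residuated poset (both one-line consequences of residuation) and the bookkeeping via Lemma~\ref{lem:IdpA:identities} that identifies the ranges of $x\ld x$ and $x\rd x$ with $\Idp\m A$. Note that balance is used only to guarantee $x\ld x\approx x\rd x$, so that $1_x\ld 1$ is unambiguous and the ``idempotent-valued'' observation applies; in fact the ``only if'' direction would go through without assuming balance.
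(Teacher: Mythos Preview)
Your argument is correct and matches the paper's proof: both observe that the right-hand side $(1_x\ld 1)\ld(1_x\ld 1)$ is of the form $a\ld a$ and hence $\ge 1$, so the identity forces $1\le 1_x\ld 1$, whence $1_x\le 1$ and thus $1_x=1$ by positivity of $1_x$. The paper phrases this via $1\le 1_{p\ld 1}=p\ld 1$ for an arbitrary positive idempotent $p$, but this is exactly your reasoning; your side remark that balance is inessential for the ``only if'' direction is also correct.
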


\begin{proof}
Let $p$ be a positive idempotent satisfying that $p\ld 1 = (p\ld 1)\ld (p\ld 1) = 1_{p\ld 1}$. Since $1\le 1_{p\ld 1} = p\ld 1$, therefore $p\le 1$, whence we deduce that $p = 1$. Hence, $1_x\ld 1 \approx (1_x\ld 1)\ld (1_x\ld 1)$ implies $1_x \approx 1$. The reverse implication is immediate.\qed
\end{proof}

Nonetheless, under less restrictive conditions, the above operation $\otimes$ satisfies the remaining properties of a partition function. Let us find out these conditions. We would like~\ref{PF5} to be satisfied for arbitrary elements $a_1$ and $a_2$ and the particular value $b = 1$, namely, $1_{a_1\ld a_2}\ld 1 = 1_{a_2}\ld (1_{a_1} \ld 1) = (1_{a_1}\cdot 1_{a_2})\ld 1$. We also need the corresponding condition for the other residual, and both are true if the following equations are satisfied:
\begin{enumerate}[resume*]
    \item\label{H5} $1_{x\ld y} \approx 1_x\cdot 1_y$,
    \item\label{H6} $1_{x\rd y} \approx 1_x\cdot 1_y$.
\end{enumerate}
Notice that both equations imply~\ref{H2} and~\ref{H3} and, as a consequence, every $A_p$ is closed under residuals if $\m A$ satisfies~\ref{H5} and~\ref{H6}. On the other hand, the three properties~\ref{H1}--\ref{H3} do not imply~\ref{H4}--\ref{H6}. Indeed, the residuated poset of Example~\ref{ex:RP:H123} fails~\ref{H4}--\ref{H6}, since $pa = b$ and $1_{pa} = 1_b = q \neq p\cdot 1 = 1_p\cdot 1_a$, and $p\ld a = \bot$ and $1_{p\ld a} = 1_\bot = q \neq p\cdot 1 =1_p\cdot 1_a$.

\begin{proposition}\label{prop:H56:left:normal:band:for:residuals}
For every balanced residuated poset satisfying~\ref{H5} and~\ref{H6}, the function $\otimes\: A^2\to A$ determined by $a\otimes b = 1_b\ld a$ is a left normal band, which satisfies~\ref{PF5} for the residuals of $\m A$.
\end{proposition}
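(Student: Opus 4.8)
The plan is to verify axioms \ref{PF1}--\ref{PF3} (so that $\otimes$ is a left normal band) together with the two instances of \ref{PF5} for $\sigma\in\{\ld,\rd\}$, by a chain of equational manipulations entirely analogous to the proof of Proposition~\ref{prop:H4:monoidal:decomposition}. Three ingredients do all the work: (i) $1_p=p$ and $1_{1_x}\approx 1_x$, which hold in every residuated poset; (ii) by Proposition~\ref{prop:equivalent:char:balanced}, $\pair{\Idp\m A,\cdot}$ is a \emph{commutative} monoid, so $1_b\cdot 1_c=1_c\cdot 1_b$ for all $b,c\in A$; and (iii) the residuation laws $u\ld(v\ld w)=(v\cdot u)\ld w$ and $(w\rd v)\rd u=w\rd(u\cdot v)$. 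I also use throughout that $a\otimes b=1_b\ld a=a\rd 1_b$, which is legitimate since $\m A$ is balanced and $1_b\in\Idp\m A$. For \ref{PF1} I would check $a\otimes a=1_a\ld a=a$: the inequality $a\le 1_a\ld a$ holds because $1_a\cdot a=a$, while $1_a\ld a\le 1\ld a=a$ because $1\le 1_a$ and $\ld$ reverses the order in its first argument; alternatively this is immediate from Lemma~\ref{lem:IdpA:identities}(1) and Lemma~\ref{lem:char:A/p=Ap}(2), since $a\in(a\rd a)\ld A$.

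For \ref{PF2} and \ref{PF3} I would reduce both sides of each equation to the common normal form $(1_b\cdot 1_c)\ld a$. On one side, $a\otimes(b\otimes c)=1_{1_c\ld b}\ld a$, and \ref{H5} (applied with $x=1_c$, $y=b$, using $1_{1_c}=1_c$) gives $1_{1_c\ld b}=1_c\cdot 1_b$, so this term is $(1_c\cdot 1_b)\ld a=(1_b\cdot 1_c)\ld a$ by ingredient~(ii). On the other side, $(a\otimes b)\otimes c=1_c\ld(1_b\ld a)=(1_b\cdot 1_c)\ld a$ by ingredient~(iii); this settles \ref{PF2}. For \ref{PF3}, the same computation applied with $b$ and $c$ exchanged gives $a\otimes(c\otimes b)=1_{1_b\ld c}\ld a=(1_b\cdot 1_c)\ld a$ as well, so both sides again coincide.

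For \ref{PF5} only the cases $\sigma=\ld$ and $\sigma=\rd$ are claimed (the monoidal operation is treated separately in Proposition~\ref{prop:H4:monoidal:decomposition}, and \ref{PF4}, \ref{PF6} are deliberately not part of this statement, since by the preceding lemma \ref{PF4} for the residuals forces $1_x\approx 1$). For $\sigma=\ld$: $b\otimes(a_1\ld a_2)=1_{a_1\ld a_2}\ld b=(1_{a_1}\cdot 1_{a_2})\ld b$ by \ref{H5}, whereas $b\otimes a_1\otimes a_2=1_{a_2}\ld(1_{a_1}\ld b)=(1_{a_1}\cdot 1_{a_2})\ld b$ by ingredient~(iii), so the two agree. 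The case $\sigma=\rd$ is identical, with \ref{H6} used in place of \ref{H5} to rewrite $1_{a_1\rd a_2}$ as $1_{a_1}\cdot 1_{a_2}$. There is no genuine obstacle here; the proof is purely computational. The one point worth flagging is exactly where each hypothesis is used: the commutativity of idempotents needed to identify the two parenthesisations in \ref{PF2}/\ref{PF3} is not an extra assumption but a consequence of balancedness, while \ref{H5} and \ref{H6} are precisely the identities that allow $1_{(\cdot)}$ to be pushed through the two residuals, which is what makes the normal-form reductions go through.
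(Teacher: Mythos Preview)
Your proposal is correct and follows essentially the same route as the paper's proof: both verify \ref{PF1}--\ref{PF3} and the two instances of \ref{PF5} by reducing everything to the normal form $(1_b\cdot 1_c)\ld a$ via the residuation law $u\ld(v\ld w)=(vu)\ld w$, the identity $1_{1_c}=1_c$, commutativity of positive idempotents, and \ref{H5}/\ref{H6}. The only cosmetic difference is that for \ref{PF3} you compare $a\otimes(b\otimes c)$ and $a\otimes(c\otimes b)$ directly, whereas the paper compares $(a\otimes b)\otimes c$ and $(a\otimes c)\otimes b$ after having \ref{PF2} in hand; these are of course equivalent.
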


\begin{proof}
\ref{PF1} holds because $a\otimes a = 1_a\ld a = a$. As for~\ref{PF2}, notice that
\[
a\otimes(b\otimes c) = 1_{1_c\ld b}\ld a = (1_{1_c}\cdot 1_b)\ld a = (1_b\cdot 1_c)\ld a = 1_c\ld (1_b\ld a) = (a\otimes b)\otimes c,
\]
where the second equality follows from~\ref{H5}. It will also follow from~\ref{H6}, since $1_c\ld b = b\rd 1_c$.
Concerning~\ref{PF3},
\[
a\otimes b\otimes c = 1_c\ld (1_b \ld a) = (1_b\cdot 1_c)\ld a = (1_c\cdot 1_b)\ld a = 1_b\ld (1_c\ld a) = a\otimes c\otimes b.
\]
As for~\ref{PF5}, we need to check two equalities. The first equality is
\[
b\otimes (a_1\ld a_2) = 1_{a_1\ld a_2}\ld b = (1_{a_1}\cdot 1_{a_2})\ld b = 1_{a_2}\ld (1_{a_1}\ld b) = b\otimes a_1\otimes a_2,
\]
which is a consequence of~\ref{H5}. And the second equality is $b\otimes (a_1\rd a_2) = b\otimes a_1\otimes a_2$, which is a consequence of~\ref{H6}.\qed 
\end{proof}

Since we are looking for a generalization of Theorem~\ref{thm:Plonka:Theorem}, let us analyze its different parts separately. We will divide the proof of P\l{}onka's decomposition theorem into a number of lemmas in order to clarify which assumptions are used in the proof of every claim. The proofs themselves can be found, in one way or another, in the literature (see~\cite{BPP22} and~\cite{GR91}, for instance). Given an operation $\odot\: A^2\to A$, we can define the binary relations $\le_\odot$ and $\equiv_\odot$ as follows:
\[
a\le_\odot b \iff\ b\odot a = b \qquad\text{and}\qquad a\equiv_\odot b \iff a\le_\odot b \text{ and }\space b\le_\odot a.
\]

\begin{lemma}\label{lem:PF123:join:semilattice}
For every left normal band $\odot\: A^2\to A$, the relation $\le_\odot$ is a preorder on $A$ compatible with $\odot$. Therefore, $\equiv_\odot$ is a congruence of $\pair{A,\odot}$ and the induced relation $\le'_\odot$ is a partial order on $A/{\equiv}_\odot$. Moreover, $\m S^\odot = \pair{A,\odot}/{\equiv}_\odot$ is a join semilattice.
\end{lemma}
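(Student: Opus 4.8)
The plan is to verify the standard facts about left normal bands in sequence, using only the identities \ref{PF1}--\ref{PF3}. First I would show that $\le_\odot$ is a preorder. Reflexivity is \ref{PF1} directly. For transitivity, suppose $a \le_\odot b$ and $b \le_\odot c$, so $b \odot a = b$ and $c \odot b = c$; then $c \odot a = (c \odot b) \odot a = c \odot (b \odot a) = c \odot b = c$ using \ref{PF2}, so $a \le_\odot c$. Next I would check compatibility of $\le_\odot$ with $\odot$, i.e.\ that $a \le_\odot b$ implies both $a \odot c \le_\odot b \odot c$ and $c \odot a \le_\odot c \odot b$ for all $c$. The left-hand compatibility: from $b \odot a = b$, I compute $(b \odot c) \odot (a \odot c) = b \odot (c \odot a) \odot c$ (rearranging by \ref{PF2} and \ref{PF3}) $= b \odot (a \odot c) \odot c$ (by \ref{PF3} applied inside) and then reduce this to $b \odot c$; the key move is that $b \odot a \odot \cdots = b \odot \cdots$ whenever $b \odot a = b$, so absorbing the stray $a$. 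The right-hand compatibility: from $b \odot a = b$, I want $(c \odot b) \odot (c \odot a) = c \odot b$, which by associativity is $c \odot b \odot c \odot a = c \odot (c \odot b) \odot a$ — but $c \odot c = c$ is not available, only $a \odot a = a$; so instead I use \ref{PF3} to commute and reduce $c \odot b \odot c \odot a = c \odot b \odot a \odot c = c \odot b \odot c = c \odot b$, the middle step using $b \odot a = b$ up to the reordering.

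Once $\le_\odot$ is a preorder compatible with $\odot$, it follows formally that $\equiv_\odot = {\le_\odot} \cap {\ge_\odot}$ is an equivalence relation compatible with $\odot$, hence a congruence of $\pair{A,\odot}$, and that $\le_\odot$ descends to a well-defined relation $\le'_\odot$ on the quotient $A/{\equiv}_\odot$ which is reflexive and transitive; antisymmetry of $\le'_\odot$ is immediate from the definition of $\equiv_\odot$, so $\le'_\odot$ is a partial order. It then remains to show $\m S^\odot = \pair{A,\odot}/{\equiv}_\odot$ is a join semilattice, for which I would prove that in $\pair{A,\odot}$ the element $a \odot b$ is a least upper bound of $\{a,b\}$ with respect to $\le_\odot$ modulo $\equiv_\odot$. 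That $a \odot b$ is an upper bound of $b$: $(a \odot b) \odot b = a \odot (b \odot b) = a \odot b$ by \ref{PF1}, so $b \le_\odot a \odot b$. That it is an upper bound of $a$: $(a \odot b) \odot a = a \odot b \odot a = a \odot a \odot b = a \odot b$ using \ref{PF3} then \ref{PF1}, so $a \le_\odot a \odot b$. For the least upper bound property, if $a \le_\odot c$ and $b \le_\odot c$, meaning $c \odot a = c$ and $c \odot b = c$, then $c \odot (a \odot b) = (c \odot a) \odot b = c \odot b = c$, so $a \odot b \le_\odot c$. Passing to the quotient, $[a \odot b]$ is the join of $[a]$ and $[b]$, so $\m S^\odot$ is a join semilattice.

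The main obstacle is the compatibility of $\le_\odot$ with $\odot$ on the \emph{right} argument, because a left normal band is genuinely not commutative and not a full band — we do not have $c \odot c = c$ for arbitrary $c$, only the identities \ref{PF1}--\ref{PF3}. So every manipulation must route stray factors to the right where $b \odot a = b$ can absorb them, using \ref{PF3} to reorder and \ref{PF2} to reassociate; one has to be careful that the reordering steps only permute the tail of a $\odot$-product and never touch the leftmost factor, since \ref{PF3} only licenses swapping the second and third factors of $a \odot (b \odot c)$. Everything else is a routine consequence of the preorder/congruence machinery. I would present the argument in exactly this order: preorder, compatibility, descent to the quotient and antisymmetry, and finally the semilattice join formula $[a] \lor [b] = [a \odot b]$.
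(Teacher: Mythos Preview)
The paper omits the proof of this lemma entirely, deferring to the literature, so there is no ``paper's own proof'' to compare against; your outline is the standard argument and your computations are correct.

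However, your commentary contains a genuine conceptual slip that you should fix before writing this up. You assert that ``$c \odot c = c$ is not available, only $a \odot a = a$,'' and later that a left normal band ``is not a full band.'' This misreads \ref{PF1}: the identity $a \odot a = a$ is universally quantified over $A$, so $c \odot c = c$ holds for every $c$, and a left normal band \emph{is} a band (an idempotent semigroup satisfying the additional left-normal law \ref{PF3}). Your ``main obstacle'' is therefore illusory. In fact your own computation for right compatibility silently uses idempotence in the final step: to get $(c \odot b) \odot c = c \odot b$ you need $c \odot (b \odot c) = c \odot (c \odot b) = (c \odot c) \odot b = c \odot b$, invoking \ref{PF3}, \ref{PF2}, and then \ref{PF1} for $c$. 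So the argument is right, but the narration around it is wrong; clean that up and the proof is fine.
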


If a balanced residuated poset satisfies~\ref{H4}--\ref{H6}, then we have two left normal bands on the same set $A$: $\odot$ and $\otimes$, by Propositions~\ref{prop:H4:monoidal:decomposition} and~\ref{prop:H56:left:normal:band:for:residuals}. Notice that in general they are different.\footnote{Indeed, one can readily see that $\odot$ and $\otimes$ are equal if and only if $\m A$ satisfies $1_x\approx 1$, and this forces $a\odot b = a\otimes b = a$, for all $a,b\in A$.} However, they induce the same equivalence relation. We already saw that $a\equiv_\odot b$ if and only if $1_a = 1_b$. As for $\otimes$, notice that 
\[
a\le_\otimes b \iff b\otimes a = b \iff 1_a\ld b = b \iff 1_a\le b\rd b = 1_b.
\]
Hence, $a\equiv_\otimes b \iff 1_a = 1_b$. In what follows, we will say that two left normal bands $\odot$ and $\otimes$ on the same set $A$ are \emph{compatible} if ${\equiv}_\odot = {\equiv}_\otimes$.

Under the conditions of the previous lemma, let us choose a set $I\subset A$ of representatives of the equivalence classes of $\equiv_\odot$ and let $\m I =\pair{I, \lor}$ be the transport of the structure of $\m S^\odot$ to $I$. That is, for every $p,q\in I$, $p\lor q$ is the representative of the equivalence class $[p\odot q]$. Thus, $\m I = \pair{I,\lor} \cong \m S^\odot$ and the induced order in $I$ is the restriction of $\le_\odot$, which we denote in the same way. Let $A_p$ be the equivalence class $[p]$, for every $p\in I$.

\begin{lemma}\label{lem:PF123:phis:directed:system}
Let $\m A$ be an algebra with a left normal band $\odot$.
\begin{enumerate}[(1)]
\item For every $p\le_\odot q$ in $\m I$, the map $\varphi_{pq}\: A_p\to A_q$ given by $\varphi_{pq}(a) = a\odot q$ is well defined.

\item The family $\Phi = \{\varphi_{pq} : p\le_\odot q\}$ is a semilattice directed system of maps.
\end{enumerate}
\end{lemma}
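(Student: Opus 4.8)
The plan is to prove both parts by direct computation with the left normal band axioms~\ref{PF1}--\ref{PF3}, invoking from Lemma~\ref{lem:PF123:join:semilattice} only that $\le_\odot$ is a preorder, and recalling that the order on $I$ is the restriction of $\le_\odot$ and that $A_p=[p]$ for $p\in I$. For part~(1), fix $p\le_\odot q$ in $\m I$ and $a\in A_p$, so that $a\equiv_\odot p$; I must show $a\odot q\equiv_\odot q$, i.e.\ $a\odot q\in A_q$. One half, $q\le_\odot a\odot q$, is immediate from~\ref{PF2} and~\ref{PF1}: $(a\odot q)\odot q=a\odot(q\odot q)=a\odot q$. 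For the other half, $a\odot q\le_\odot q$, I first rewrite $q\odot(a\odot q)=q\odot(q\odot a)=(q\odot q)\odot a=q\odot a$ using~\ref{PF3},~\ref{PF2},~\ref{PF1}, and then observe that $q\odot a=q$: since $q\odot p=q$ (as $p\le_\odot q$) and $p\odot a=p$ (as $a\equiv_\odot p$), associativity~\ref{PF2} gives $q\odot a=(q\odot p)\odot a=q\odot(p\odot a)=q\odot p=q$. Hence $\varphi_{pq}$ is well defined as a map $A_p\to A_q$.

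For part~(2), the equation $\varphi_{pp}=\mathrm{id}_{A_p}$ is immediate: for $a\in A_p$ we have $p\le_\odot a$, i.e.\ $a\odot p=a$, which says exactly $\varphi_{pp}(a)=a$. It remains to verify $\varphi_{qr}\circ\varphi_{pq}=\varphi_{pr}$ for $p\le_\odot q\le_\odot r$; for $a\in A_p$ this amounts to $(a\odot q)\odot r=a\odot r$. The one point that needs attention is that in a left normal band $q\odot r$ need not equal $r$ even when $q\le_\odot r$ (which only yields $r\odot q=r$), so one commutes the last two factors before reassociating: $(a\odot q)\odot r=a\odot(q\odot r)=a\odot(r\odot q)=(a\odot r)\odot q$ by~\ref{PF2},~\ref{PF3},~\ref{PF2}. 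Finally $(a\odot r)\odot q=a\odot r$, i.e.\ $q\le_\odot a\odot r$: this follows from $q\le_\odot r$ together with $r\le_\odot a\odot r$ (the latter being $(a\odot r)\odot r=a\odot r$, from~\ref{PF2} and~\ref{PF1}, or the special case of part~(1)) and the transitivity of $\le_\odot$ provided by Lemma~\ref{lem:PF123:join:semilattice}. Thus $\Phi$ is a semilattice directed system of maps.

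I do not anticipate a genuine obstacle here: this is the classical P\l{}onka decomposition argument, and the only care required is bookkeeping — making sure every step uses solely~\ref{PF1}--\ref{PF3} and order facts already established — since the same template will later be rerun with the second band $\otimes$ in the metamorphism setting. It may also be worth recording for later use that $\varphi_{pq}(a)=a\odot b$ for \emph{every} $b\in A_q$, not merely for $b=q$: from $b\equiv_\odot q$ we have $b\odot q=b$ and $q\odot b=q$, whence $a\odot b=(a\odot b)\odot q=(a\odot q)\odot b=a\odot q$, using~\ref{PF3} for the middle equality.
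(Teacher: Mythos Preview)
Your proof is correct and complete; each step uses only \ref{PF1}--\ref{PF3} together with the preorder facts from Lemma~\ref{lem:PF123:join:semilattice}, exactly as claimed. Note that the paper itself does not supply a proof of this lemma: it states the result and refers the reader to the literature (\cite{BPP22}, \cite{GR91}), since this is part of the classical P\l{}onka decomposition. Your argument is precisely the standard one found there, and your final remark that $\varphi_{pq}(a)=a\odot b$ for every $b\in A_q$ matches the formulation in Theorem~\ref{thm:Plonka:Theorem}(4).
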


\begin{lemma}\label{lem:A:PF5:closure:under:sigma}
Let $\m A$ be an algebra with a left normal band $\odot$.
\begin{enumerate}[(1)]
\item If $\sigma^\m A$ is an $n$-ary operation which satisfies~\ref{PF5}, then the following hold.

\begin{enumerate}
\item For all $a_1,\dots, a_n\in A$, we have that $\sigma^\m A(a_1,\dots,a_n) \equiv_\odot a_1\odot\dots\odot a_n$.

\item Every equivalence class $A_p$ is closed under $\sigma^\m A$.
\end{enumerate}    

\item If $\omega^\m A$ is a constant which satisfies~\ref{PF6}, then the class $[\omega^\m A]$ is the least element of $\m S^\odot$.
\end{enumerate}
\end{lemma}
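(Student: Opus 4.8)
The plan is to dispatch the three assertions in turn, relying only on the left normal band identities \ref{PF1}--\ref{PF3}, the hypotheses \ref{PF5} and \ref{PF6}, and Lemma~\ref{lem:PF123:join:semilattice} (which tells us that $\equiv_\odot$ is a congruence of $\pair{A,\odot}$ and $\le_\odot$ a compatible preorder). Before touching $\sigma$ or $\omega$, I would isolate the one computational fact on which everything rests: in a left normal band, an iterated product $a_1\odot a_2\odot\dots\odot a_k$ (unambiguous as written, by \ref{PF2}) depends only on its first factor $a_1$ and on the \emph{set} $\{a_1,\dots,a_k\}$ of factors. This is immediate from \ref{PF1}, which lets duplicate factors be deleted, together with the derived law $(x\odot y)\odot z \approx (x\odot z)\odot y$ (a consequence of \ref{PF2} and \ref{PF3}), which lets us permute all factors \emph{except} the first. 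In particular, writing $c = a_1\odot\dots\odot a_n$, we get $c\odot a_1\odot\dots\odot a_n = c$, since the left-hand side is a product whose first factor is $a_1$ and whose factor set is $\{a_1,\dots,a_n\}$; and, for any $p$, $p\odot\dots\odot p = p$.

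For part~(1a), fix $a_1,\dots,a_n$, put $c = a_1\odot\dots\odot a_n$ and $s = \sigma^{\m A}(a_1,\dots,a_n)$, and prove $s\equiv_\odot c$ by establishing $s\le_\odot c$ and $c\le_\odot s$ separately. For the first, $c\odot s = c\odot a_1\odot\dots\odot a_n = c$, using \ref{PF5} with $b = c$ and then the normal-form fact; this is exactly $s\le_\odot c$. For the second, \ref{PF5} with $b = s$ gives $s\odot s = s\odot a_1\odot\dots\odot a_n$, and the left-hand side equals $s$ by \ref{PF1}, so $s\odot c = s$, i.e.\ $c\le_\odot s$. Part~(1b) is then a short consequence: if $a_1,\dots,a_n\in A_p$, then $a_i\equiv_\odot p$ for each $i$, so, since $\equiv_\odot$ is a congruence of $\pair{A,\odot}$, $a_1\odot\dots\odot a_n\equiv_\odot p\odot\dots\odot p = p$; combining this with part~(1a) yields $\sigma^{\m A}(a_1,\dots,a_n)\equiv_\odot p$, that is, $\sigma^{\m A}(a_1,\dots,a_n)\in A_p$. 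For part~(2), \ref{PF6} says $b\odot\omega^{\m A} = b$ for every $b\in A$, which is precisely $\omega^{\m A}\le_\odot b$ for every $b$; passing to the quotient $\m S^\odot$, the class $[\omega^{\m A}]$ lies below every class, hence is the least element.

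I do not expect a genuine obstacle here: the argument is essentially bookkeeping with the band identities. The only place where care is really needed — and the reason I would state the normal-form fact as an explicit preliminary rather than inlining it — is the asymmetric role of the first factor in an iterated $\odot$-product: permutation and absorption are legitimate only for the factors after the first, so it is easy to write down a ``simplification'' of $c\odot a_1\odot\dots\odot a_n$ that silently moves or deletes the leading term. Once that fact is cleanly in hand, each of (1a), (1b), and (2) is a two- or three-line computation.
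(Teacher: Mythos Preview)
Your proposal is correct and follows essentially the same route as the paper: both directions of (1a) are obtained by applying \ref{PF5} with $b=s$ and $b=c$ (the paper uses a representative $q\equiv_\odot c$ instead of $c$ itself) and then simplifying the resulting iterated $\odot$-products via \ref{PF1}--\ref{PF3}, with (1b) and (2) following exactly as you indicate. The only cosmetic difference is that you isolate the left-normal-band normal-form fact up front, whereas the paper performs the equivalent manipulations inline.
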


\begin{proof}
For the first claim, notice that
\begin{align*}
\sigma^\m A(a_1,\dots,a_n) \odot a_i &= \sigma^\m A(a_1,\dots,a_n) \odot \sigma^\m A(a_1,\dots,a_n) \odot a_i \\
    &= \sigma^\m A(a_1,\dots,a_n) \odot a_1 \odot \dots \odot a_n\odot a_i\\
    &= \sigma^\m A(a_1,\dots,a_n) \odot a_1 \odot \dots \odot a_n\\
    &= \sigma^\m A(a_1,\dots,a_n) \odot \sigma^\m A(a_1,\dots,a_n)\\
    &= \sigma^\m A(a_1,\dots,a_n).
\end{align*}
The first and the last equalities follow from~\ref{PF1} and the second and fourth from~\ref{PF5}. The third equality is a consequence of a number of applications of~\ref{PF2} and~\ref{PF3} and one application of~\ref{PF1}. Therefore, $a_i\le_\odot  \sigma^\m A(a_1,\dots,a_n)$ and thus $a_1\odot\dots\odot a_n \le_\odot \sigma^\m A(a_1,\dots,a_n)$. For the reverse inequality, take $q\in I$ so that $q \equiv_\odot a_1\odot\dots\odot a_n$. Then,
\[
q\odot\sigma^\m A(a_1,\dots,a_n) = q\odot a_1\odot \dots\odot a_n \equiv_\odot q\odot q = q,
\]
and hence $\sigma^\m A(a_1,\dots,a_n) \le_\odot q \equiv_\odot a_1\odot\dots\odot a_n$. The second part follows immediately, because if $a_1,\dots,a_n\in A_p$ then 
\[
\sigma^\m A(a_1,\dots,a_n) = a_1\odot \dots\odot a_n \equiv_\odot p\odot \dots\odot p = p,
\]
that is, $\sigma^\m A(a_1,\dots,a_n) \in A_p$. Finally, if $a\odot \omega^\m A = a$ holds for all $a\in A$, then $\omega^\m A \le_\odot a$, whence the last claim follows.\qed   
\end{proof}

Thus, we see that the only part of Theorem~\ref{thm:Plonka:Theorem} that requires~\ref{PF4} is the fact that the semilattice directed family of maps is actually a family of homomorphisms. This is precisely the part that we will relax in our generalization of P\l{}onka sum.

A \emph{partition system} for an algebra $\m A$ is an assignment $\sigma\mapsto\pair{\odot^{\sigma}_0,\dots,\odot^{\sigma}_n}$ for every $n$-ary operation or constant symbol $\sigma$ of the type of $\m A$ to tuples of elements of a set $O$ of compatible left normal bands on $A$, such that for every $n$-ary operation symbol $\sigma$, every constant symbol $\omega$, and all $a_1,\dots,a_n,b\in A$,
\begin{enumerate}[align=left, left=0pt .. \parindent]
\myitem[(PF4$^\sigma$)]\label{PF4s} $\sigma^\m A(a_1,\dots,a_n)\odot^\sigma_0 b = \sigma^\m A(a_1\odot^{\sigma}_1 b,\dots,a_n\odot^{\sigma}_n b)$,

\myitem[(PF5$^\sigma$)]\label{PF5s} $b\odot^\sigma_0 \sigma^{\mathbf{A}}(a_1,\dots,a_n) = b\odot^\sigma_0 a_{1}\odot^\sigma_0 \dots\odot^\sigma_0 a_n $,

\myitem[(PF6$^\omega$)]\label{PF6w} $b\odot^\omega_0 \omega^\m A = b$.
\end{enumerate}

Given two algebras $\m A$ and $\m B$ of the same type $\tau$, a \emph{metamorphism} $f\:\m A\meta \m B$ is a map $f\:\tau\to \big(B^A\big)^*$, $\sigma\mapsto f^\sigma = \pair{f^{\sigma 0},\dots,f^{\sigma n}}$, where $n$ is the arity of $\sigma$, such that
\[
f^{\sigma 0}(\sigma^\m A(a_1,\dots,a_n)) = \sigma^\m B(f^{\sigma 1}(a_1),\dots, f^{\sigma n}(a_n)).
\]
In particular, for every constant $\omega$, $f^\omega = \pair{f^{\omega 0}}$ and $f^{\omega 0}(\omega^\m A) = \omega^\m B$. The \emph{composition} of two metamorphisms $f\:\m A\meta\m B$ and $g\:\m B\meta\m C$ is the metamorphism $g\circ f\:\m A\meta\m C$ defined by $(g\circ f)^\sigma = \pair{g^{\sigma 0}\circ f^{\sigma 0},\dots,g^{\sigma n}\circ f^{\sigma n}}$. The \emph{identity} metamorphism on $\m A$ is $id^\m A\:\m A\meta \m A$ defined by $id^\sigma = \pair{id,\dots,id}$. The algebras of type $\tau$ and metamorphisms form a category $\Meta^\tau$. Every homomorphism $f\:\m A\to\m B$ determines a metamorphism $\dot f\:\m A\meta\m B$ such that for every $n$-ary operation or constant symbol $\sigma$, $\dot f^{\sigma 0} = \dots = \dot f^{\sigma n} = f$. This assignment defines a faithful functor from the category $\Alg^\tau$ of algebras of type~$\tau$ and homomorphisms to $\Meta^\tau$ which is the identity on objects.

A \emph{semilattice directed system of metamorphisms} is a functor $\Xi\: \m I\to\Meta^\tau$ where $\m I = \pair{I,\lor}$ is a join semilattice. It can also be seen as a family $\Xi = \{{\xi_{pq}\:\m A_p\meta\m A_q} : {p\le q \text{ in } \m I}\}$ such that $\xi_{pp} = id^{\m A_p}$ and $\xi_{qr}\circ\xi_{pq} = \xi_{pr}$. Note that every directed system of homomorphisms can be viewed as a directed system of metamorphisms, via the composition with the inclusion $\Alg^\tau\to\Meta^\tau$.

The \emph{P\l{}onka sum} of a directed system of metamorphisms~$\Xi$ is the algebra $\m A$ whose universe is $A = \biguplus A_p$, and such that for every $n$-ary operation $\sigma$ and all $a_1\in A_{p_1}, \dots, a_n\in A_{p_n}$,
\[
\sigma^\m A(a_1,\dots,a_n) = \sigma^{\m A_q}(\xi^{\sigma 1}_{p_1q}(a_1),\dots,\xi^{\sigma n}_{p_nq}(a_n)),
\]
where $q = p_1\lor\dots\lor p_n$. Consequently, if the type $\tau$ contains a constant symbol~$\omega$, then we assume that $\m I$ has a least element $\bot$ and $\omega^\m A = \omega^{\m A_\bot}$. It follows immediately from its definition that the constant-free reduct of $\m A_p$ is a subalgebra of the constant-free reduct of $\m A$, and if $\m A$ has constants then $\m A_\bot$ is a subalgebra of~$\m A$. We have all the machinery for our main results about P\l{}onka sums of directed systems of metamorphisms, of which Theorem~\ref{thm:Plonka:Theorem} is an immediate consequence. The proofs are straightforward and we omit them for lack of space.

\begin{restatable}{theorem}{DecompositionThm}
\label{thm:Plonka:decomposition:meta}
Let $\m A$ be an algebra with a partition system, and $\equiv$ and $\m I = \pair{I,\lor}$ their induced equivalence relation and corresponding join semilattice. 
\begin{enumerate}[(1), leftmargin=*]
\item Every equivalence class $A_p$ of $\equiv$ is the universe of an algebra $\m A_p$ of the same type whose constant-free reduct is a subalgebra of the constant-free reduct of~$\m A$, and $\omega^{\m A_p} = \omega^{\m A}\odot^\omega_0 p$ for every constant symbol $\omega$.

\item For all $p\le q$ in $\m I$, there is a metamorphism $\xi_{pq}\:\m A_p\meta\m A_q$ defined by
\[
\xi^{\sigma i}_{pq}(a) = a\odot^\sigma_i q, \quad \text{for every $n$-ary symbol $\sigma$ and $i\le n$}.
\]

\item The family $\Xi = \{\xi_{pq}\:\m A_p\meta\m A_q : p\le q \text{ in }\m I\}$ is a semilattice directed system of metamorphisms and $\m A$ is its P\l{}onka sum.
\end{enumerate}
\end{restatable}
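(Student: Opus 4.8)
The plan is to run the standard proof of P\l{}onka's decomposition theorem, now in the looser setting of partition systems and metamorphisms, splitting it along the three clauses and leaning on the two lemmas already isolated: Lemma~\ref{lem:A:PF5:closure:under:sigma}, which handles closure of the classes under the operations and uses only conditions of the form~\ref{PF5s} and~\ref{PF6w}, and Lemma~\ref{lem:PF123:join:semilattice}, which provides, for each band $\odot\in O$, the preorder $\le_\odot$, the congruence $\equiv_\odot$, and the quotient join semilattice. First I would fix the common setup: since the bands of $O$ are pairwise compatible, they induce the same equivalence relation $\equiv$, hence the same partition $\{A_p : p\in I\}$ and the same quotient join semilattice $\m I=\pair{I,\lor}$, where (exactly as in the paragraph preceding Lemma~\ref{lem:PF123:phis:directed:system}) $I\subseteq A$ is a transversal, $A_p=[p]$, and $p\lor q$ is the representative of $[p\odot q]$. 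Two facts, both immediate from the definitions of $\le_\odot$ and $\equiv_\odot$, will be used repeatedly for every $\odot\in O$: \textbf{(i)}~each $\equiv$-class is a left-zero subband, so that $x\equiv y$ implies $x\odot y=x$; and \textbf{(ii)}~the order of $\m I$ is the quotient of $\le_\odot$, so that $[x]\le[y]$ in $\m I$ iff $y\odot x=y$.

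For clause~(1): for every $n$-ary operation symbol $\sigma$ the band $\odot^\sigma_0$ satisfies~\ref{PF5s}, so Lemma~\ref{lem:A:PF5:closure:under:sigma}(1) applies and every class $A_p$ is closed under $\sigma^\m A$; letting $\sigma^{\m A_p}$ be the restriction of $\sigma^\m A$ then turns the constant-free reduct of $\m A_p$ into a subalgebra of the constant-free reduct of $\m A$. For a constant symbol $\omega$, condition~\ref{PF6w} for $\odot^\omega_0$ together with Lemma~\ref{lem:A:PF5:closure:under:sigma}(2) shows that $[\omega^\m A]$ is the least element $\bot$ of $\m I$, so $\omega^\m A\odot^\omega_0 p$ lies in $A_{\bot\lor p}=A_p$, and we set $\omega^{\m A_p}:=\omega^\m A\odot^\omega_0 p$; for $p=\bot$ this is $\omega^\m A$ itself by~\textbf{(i)}, so $\m A_\bot$ carries all the constants and is a genuine subalgebra of $\m A$. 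For clause~(2), fix $p\le q$ in $\m I$. Each map $\xi^{\sigma i}_{pq}$, given by $\xi^{\sigma i}_{pq}(a)=a\odot^\sigma_i q$, sends $A_p$ into $A_{p\lor q}=A_q$, and once the restrictions are unfolded the metamorphism identity
\[
\xi^{\sigma 0}_{pq}\big(\sigma^{\m A_p}(a_1,\dots,a_n)\big)=\sigma^{\m A_q}\big(\xi^{\sigma 1}_{pq}(a_1),\dots,\xi^{\sigma n}_{pq}(a_n)\big)
\]
is exactly~\ref{PF4s} with $b:=q$. For a constant $\omega$, $\omega^{\m A_p}\odot^\omega_0 q=(\omega^\m A\odot^\omega_0 p)\odot^\omega_0 q=(\omega^\m A\odot^\omega_0 q)\odot^\omega_0 p$ by~\ref{PF2}--\ref{PF3}, and this equals $\omega^\m A\odot^\omega_0 q=\omega^{\m A_q}$ by~\textbf{(ii)}, since $p\le q$ in $\m I$ and $\omega^\m A\odot^\omega_0 q\in A_q$.

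For clause~(3): for $a\in A_p$ we have $\xi^{\sigma i}_{pp}(a)=a\odot^\sigma_i p=a$ by~\textbf{(i)}, so $\xi_{pp}=id^{\m A_p}$, and for $p\le q\le r$ in $\m I$,
\[
(\xi_{qr}\circ\xi_{pq})^{\sigma i}(a)=(a\odot^\sigma_i q)\odot^\sigma_i r=(a\odot^\sigma_i r)\odot^\sigma_i q=a\odot^\sigma_i r=\xi^{\sigma i}_{pr}(a),
\]
using~\ref{PF2}--\ref{PF3} and then~\textbf{(ii)} (as $q\le r$ in $\m I$ and $a\odot^\sigma_i r\in A_r$), the computation for $\omega$-components being the same; hence $\Xi$ is a semilattice directed system of metamorphisms. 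To see $\m A$ is its P\l{}onka sum, take $a_i\in A_{p_i}$ and put $q:=p_1\lor\dots\lor p_n$: the P\l{}onka-sum value of $\sigma$ on $\pair{a_1,\dots,a_n}$ unfolds to $\sigma^\m A(a_1\odot^\sigma_1 q,\dots,a_n\odot^\sigma_n q)$, which by~\ref{PF4s} is $\sigma^\m A(a_1,\dots,a_n)\odot^\sigma_0 q$; and since Lemma~\ref{lem:A:PF5:closure:under:sigma}(1) gives $\sigma^\m A(a_1,\dots,a_n)\equiv a_1\odot^\sigma_0\dots\odot^\sigma_0 a_n$, whose $\equiv$-class is $p_1\lor\dots\lor p_n=q$, that product collapses to $\sigma^\m A(a_1,\dots,a_n)$ by~\textbf{(i)}; for a constant $\omega$ the P\l{}onka sum assigns $\omega^{\m A_\bot}=\omega^\m A\odot^\omega_0\bot=\omega^\m A$, again by~\textbf{(i)}. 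The main thing to watch is bookkeeping: one must keep the partition, the semilattice $\m I$, and facts~\textbf{(i)}--\textbf{(ii)} uniform across all bands in $O$---which is exactly what compatibility secures---and must track the correct band $\odot^\sigma_i$ in each argument slot, while the constant case calls for a little extra care in rearranging band terms via~\ref{PF2}--\ref{PF3} together with the order of $\m I$. Past that, each clause reduces to a single application of the matching partition-system axiom plus left-normal-band arithmetic, and Theorem~\ref{thm:Plonka:Theorem} is recovered as the case $O=\{\odot\}$ with $\odot^\sigma_i=\odot$ for all $\sigma,i$.
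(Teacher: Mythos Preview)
The paper omits this proof as ``straightforward,'' so there is no detailed argument to compare against; your write-up supplies exactly the argument the authors have in mind, and its skeleton---Lemmas~\ref{lem:PF123:join:semilattice} and~\ref{lem:A:PF5:closure:under:sigma} for the partition and closure, \ref{PF4s} for the metamorphism identity, and left-normal-band arithmetic for the directed-system and P\l{}onka-sum verifications---is correct and complete in all but one respect.

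The one step that does not go through as written is your opening inference ``the bands of $O$ are pairwise compatible \dots\ hence the same quotient join semilattice $\m I$.'' Compatibility, as the paper defines it, only says $\equiv_\odot=\equiv_\otimes$; it does \emph{not} force the induced orders on $A/{\equiv}$ to coincide. (For a quick counterexample, take $A=\{a,b,c\}$ with two distinct semilattice structures, say $a<b<c$ and $b<a<c$: each is a commutative, hence left normal, band, and both have $\equiv$ equal to the identity, yet they yield different orders on $A$.) You rely on the single order of $\m I$ every time you invoke fact~\textbf{(ii)} for a band other than the one that defined $\m I$---for instance when you argue that $a\odot^\sigma_i q\in A_q$ in clause~(2), or that $(a\odot^\sigma_i r)\odot^\sigma_i q=a\odot^\sigma_i r$ in clause~(3)---and also when you identify the class of $a_1\odot^\sigma_0\dots\odot^\sigma_0 a_n$ with $p_1\lor\dots\lor p_n$ in the P\l{}onka-sum check. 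All of these steps need the preorders $\le_\odot$ for $\odot\in O$ to agree on the quotient. In the paper's intended application this is true (both $\le_\odot$ and $\le_\otimes$ reduce to $1_a\le 1_b$, as shown just before the definition of compatibility), but for the abstract theorem one must either read ``compatible'' as ``same preorder'' or add a verification that the axioms \ref{PF4s}--\ref{PF6w} force the quotient orders to match; the paper's statement shares this ambiguity, and your proof inherits it.
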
\unskip

\begin{restatable}{theorem}{DirectedSystemFromPartition}
Every semilattice directed system of metamorphisms is induced by a partition system for its P\l{}onka sum $\m A$.
\end{restatable}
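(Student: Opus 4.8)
The plan is to build the required partition system on the P\l{}onka sum directly from the given data. Write $\m A$ for the P\l{}onka sum of $\Xi = \{\xi_{pq}\:\m A_p\meta\m A_q : p\le q\text{ in }\m I\}$, so that $A=\biguplus_{p\in I}A_p$. For each $n$-ary operation or constant symbol $\sigma$ and each $i\in\{0,\dots,n\}$ I would define a binary operation $\odot^\sigma_i\:A^2\to A$ by
\[
a\odot^\sigma_i b \;=\; \xi^{\sigma i}_{p,\,p\lor q}(a)\qquad\text{for }a\in A_p,\ b\in A_q,
\]
which is well defined since $p\le p\lor q$ in $\m I$. I then have to verify three things: that $O=\{\odot^\sigma_i\}$ is a set of \emph{compatible} left normal bands; that the assignment $\sigma\mapsto\pair{\odot^\sigma_0,\dots,\odot^\sigma_n}$ is a partition system for $\m A$; and that feeding this partition system into Theorem~\ref{thm:Plonka:decomposition:meta} recovers $\Xi$ (the P\l{}onka sum of the recovered system then being $\m A$ automatically).

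First, the left normal band axioms. Property~\ref{PF1} is immediate from $\xi_{pp}=id^{\m A_p}$, which gives $\xi^{\sigma i}_{pp}=id$. For~\ref{PF2} and~\ref{PF3} I would take $a\in A_p$, $b\in A_q$, $c\in A_r$ and unwind both $a\odot^\sigma_i(b\odot^\sigma_i c)$ and $(a\odot^\sigma_i b)\odot^\sigma_i c$ to $\xi^{\sigma i}_{p,\,p\lor q\lor r}(a)$; in the second case this uses the $i$-th component of $\xi_{qr}\circ\xi_{pq}=\xi_{pr}$ applied along $p\le p\lor q\le p\lor q\lor r$, i.e.\ $\xi^{\sigma i}_{p\lor q,\,p\lor q\lor r}\circ\xi^{\sigma i}_{p,\,p\lor q}=\xi^{\sigma i}_{p,\,p\lor q\lor r}$, and~\ref{PF3} then follows because $p\lor q\lor r$ is symmetric in $q,r$. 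For compatibility, note that $b\odot^\sigma_i a=\xi^{\sigma i}_{q,\,q\lor p}(b)$ equals $b$ exactly when $q\lor p=q$, so $a\le_{\odot^\sigma_i}b\iff p\le q$ and $a\equiv_{\odot^\sigma_i}b\iff p=q$, uniformly in $\sigma$ and $i$; hence all the $\odot^\sigma_i$ induce the same partition $\{A_p:p\in I\}$ and the same join semilattice, canonically isomorphic to $\m I$.

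Next, the partition-system axioms. Axiom~\ref{PF6w} reduces to $b\odot^\omega_0\omega^{\m A}=\xi^{\omega 0}_{q,\,q\lor\bot}(b)=\xi^{\omega 0}_{qq}(b)=b$, using $\omega^{\m A}\in A_\bot$. For~\ref{PF5s}, with $a_j\in A_{p_j}$ and $r=p_1\lor\dots\lor p_n$ so that $\sigma^{\m A}(a_1,\dots,a_n)\in A_r$, both sides evaluate by iterated functoriality to $\xi^{\sigma 0}_{q,\,q\lor r}(b)$ where $b\in A_q$. The one computation with real content is~\ref{PF4s}, and this is the step I expect to take the most care: writing $s=q\lor r$, the left-hand side $\sigma^{\m A}(a_1,\dots,a_n)\odot^\sigma_0 b$ equals $\xi^{\sigma 0}_{r,s}\bigl(\sigma^{\m A_r}(\xi^{\sigma 1}_{p_1r}(a_1),\dots,\xi^{\sigma n}_{p_nr}(a_n))\bigr)$ by the P\l{}onka sum formula, and since $\xi_{rs}$ is a metamorphism and $\Xi$ is functorial this becomes $\sigma^{\m A_s}\bigl(\xi^{\sigma 1}_{p_1s}(a_1),\dots,\xi^{\sigma n}_{p_ns}(a_n)\bigr)$; the right-hand side $\sigma^{\m A}(a_1\odot^\sigma_1 b,\dots,a_n\odot^\sigma_n b)$ has arguments $a_j\odot^\sigma_j b=\xi^{\sigma j}_{p_j,\,p_j\lor q}(a_j)\in A_{p_j\lor q}$ whose index-join is $\bigvee_j(p_j\lor q)=s$, so the P\l{}onka sum formula together with functoriality rewrites it to the same expression.

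Finally, I would close the loop using Theorem~\ref{thm:Plonka:decomposition:meta}: applied to this partition system it returns the partition $\{A_p\}$, the semilattice $\m I$, and, for $p\le q$, the metamorphism with components $a\mapsto a\odot^\sigma_i q=\xi^{\sigma i}_{p,\,p\lor q}(a)=\xi^{\sigma i}_{pq}(a)$ (as $p\lor q=q$), i.e.\ exactly $\xi_{pq}$; and it asserts that $\m A$ is the P\l{}onka sum of this recovered system. Thus $\Xi$ is induced by a partition system for $\m A$, modulo the canonical identification of each $p\in I$ with its $\equiv$-class (equivalently, with the representative chosen in Theorem~\ref{thm:Plonka:decomposition:meta}) --- which, together with the index bookkeeping inside~\ref{PF4s}, is the only delicate point. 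I do not expect a genuine obstacle, since the whole argument rests on the functoriality of $\Xi$ and the defining formula of the P\l{}onka sum; the care required is purely in keeping the indices $p_j$, the auxiliary joins $r,s$, and the component index $i$ straight.
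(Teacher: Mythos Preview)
Your proposal is correct and is exactly the natural argument the paper has in mind: the paper omits this proof entirely (``The proofs are straightforward and we omit them for lack of space''), and your definition $a\odot^\sigma_i b=\xi^{\sigma i}_{p,\,p\lor q}(a)$ together with the verifications of~\ref{PF1}--\ref{PF3}, compatibility, \ref{PF4s}--\ref{PF6w}, and the recovery step via Theorem~\ref{thm:Plonka:decomposition:meta} is the straightforward route alluded to. The only point worth flagging is cosmetic: your compatibility check uses that the $A_p$ are pairwise disjoint (so $\xi^{\sigma i}_{q,q\lor p}(b)=b$ forces $q\lor p=q$), which you invoke implicitly but might state explicitly.
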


Within this framework, we can understand the decompositions of the algebraic reducts of residuated posets satisfying~\ref{H4}--\ref{H6}. By \emph{algebraic reduct} we mean the algebra resulting from removing the partial order.

\begin{theorem}\label{thm:H456:partition:meta}
Given a balanced residuated poset $\m A$ satisfying~\ref{H4}--\ref{H6}, the set $O = \{\odot,\otimes\}$ of left normal bands defined by $a\odot b = 1_b\cdot a$ and $a\otimes b = 1_b\ld a$, and the assignment $1\mapsto \odot$, $\cdot\mapsto\pair{\odot,\odot,\odot}$, $\ld\mapsto\pair{\otimes,\odot,\otimes}$, and $\rd\mapsto\pair{\otimes,\otimes,\odot}$ define a partition system for the algebraic reduct of $\m A$.
\end{theorem}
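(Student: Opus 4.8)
The plan is to verify the two requirements in the definition of a partition system. First, $O=\{\odot,\otimes\}$ must be a set of compatible left normal bands on $A$: that $\odot$ is a left normal band is Proposition~\ref{prop:H4:monoidal:decomposition}, that $\otimes$ is one is Proposition~\ref{prop:H56:left:normal:band:for:residuals}, and the computations just after Lemma~\ref{lem:PF123:join:semilattice} give $a\equiv_\odot b$ iff $1_a=1_b$ iff $a\equiv_\otimes b$, so the two bands are compatible. Second, I must check conditions~\ref{PF4s} and~\ref{PF5s} for each of $\cdot,\ld,\rd$ and~\ref{PF6w} for the constant $1$. Most of these are already in hand: \ref{PF6w} for $1$ (assigned $\odot$) is $b\odot 1=b$, which is~\ref{PF6} in Proposition~\ref{prop:H4:monoidal:decomposition}; \ref{PF4s} and~\ref{PF5s} for $\cdot$ (assigned $\pair{\odot,\odot,\odot}$) are literally~\ref{PF4} and~\ref{PF5} for the monoidal partition function $\odot$ from the same proposition; and, since the outer band assigned to $\ld$ and $\rd$ is $\otimes$, condition~\ref{PF5s} for the residuals becomes $b\otimes(a_1\ld a_2)=b\otimes a_1\otimes a_2$ and $b\otimes(a_1\rd a_2)=b\otimes a_1\otimes a_2$, which are exactly the two equalities proved under~\ref{PF5} in Proposition~\ref{prop:H56:left:normal:band:for:residuals} (via~\ref{H5} and~\ref{H6}). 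So the only genuinely new content is~\ref{PF4s} for the two residuals, i.e. the ``mixed'' identities
\[
(a_1\ld a_2)\otimes b=(a_1\odot b)\ld(a_2\otimes b)\qquad\text{and}\qquad(a_1\rd a_2)\otimes b=(a_1\otimes b)\rd(a_2\odot b).
\]

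I would prove both by rewriting each side to a common form using: the well-known identities $x\ld(y\ld z)=(yx)\ld z$ and $(x\rd y)\rd z=x\rd(zy)$, immediate from residuation; the facts, for the positive idempotent $p=1_b$, that $p$ is central and $p\ld a=a\rd p$ for all $a\in A$ (Proposition~\ref{prop:equivalent:char:balanced}, parts~(7) and~(6)); and the idempotence $1_b\cdot 1_b=1_b$. Then the left side of the first identity is $1_b\ld(a_1\ld a_2)=(a_1\cdot 1_b)\ld a_2=(1_b\cdot a_1)\ld a_2$, while the right side is $(1_b\cdot a_1)\ld(1_b\ld a_2)=(1_b\cdot 1_b\cdot a_1)\ld a_2=(1_b\cdot a_1)\ld a_2$; and the left side of the second identity is $1_b\ld(a_1\rd a_2)=(a_1\rd a_2)\rd 1_b=a_1\rd(1_b\cdot a_2)$, while the right side is $(1_b\ld a_1)\rd(1_b\cdot a_2)=(a_1\rd 1_b)\rd(1_b\cdot a_2)=a_1\rd\big((1_b\cdot a_2)\cdot 1_b\big)=a_1\rd(1_b\cdot a_2)$, the final step again using the centrality and idempotence of $1_b$.

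I expect these last two computations to be the only real obstacle: they are where balancedness genuinely enters beyond what is already packed into $\odot$ and $\otimes$, since without the centrality of $1_b$ the products $a_1\cdot 1_b$ and $1_b\cdot a_1$ (and $(1_b\cdot a_2)\cdot 1_b$) would not collapse and the tuples assigned to $\ld$ and $\rd$ would fail~\ref{PF4s}. Everything else is routine bookkeeping already carried out in Propositions~\ref{prop:H4:monoidal:decomposition} and~\ref{prop:H56:left:normal:band:for:residuals}.
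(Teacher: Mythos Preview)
Your proposal is correct and follows essentially the same approach as the paper: both reduce the task to the already-established Propositions~\ref{prop:H4:monoidal:decomposition} and~\ref{prop:H56:left:normal:band:for:residuals} plus compatibility, and identify \ref{PF4s} for the two residuals as the only new content to verify via the centrality and idempotence of $1_b$. The paper proves \hyperref[PF4s]{(PF4$^\ld$)} in a single chain $(a_1\ld a_2)\otimes b = 1_b\ld(a_1\ld a_2) = (a_1\cdot 1_b)\ld a_2 = (1_b1_b\cdot a_1)\ld a_2 = (1_b\cdot a_1)\ld (1_b\ld a_2) = (a_1\odot b)\ld(a_2\otimes b)$ and leaves \hyperref[PF4s]{(PF4$^\rd$)} as ``analogous'', whereas you reduce both sides of each identity to a common form and spell out the $\rd$ case as well---a cosmetic difference only.
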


\begin{proof}
First, $\odot$ and $\otimes$ are compatible left normal bands, $\odot$ satisfies~\hyperref[PF5s]{(PF5$^\cdot$)} and~\hyperref[PF6w]{(PF6$^1$)}, and $\otimes$ satisfies~\hyperref[PF5s]{(PF5$^\ld$)} and~\hyperref[PF5s]{(PF5$^\rd$)}, by Propositions~\ref{prop:H4:monoidal:decomposition} and~\ref{prop:H56:left:normal:band:for:residuals}. Moreover, $\odot$ also satisfies~\hyperref[PF4s]{(PF4$^\cdot$)}. Also, for all $a_1,a_2,b\in A$,
\begin{align*}
(a_1\ld a_2)\otimes b &= 1_b\ld(a_1\ld a_2) = (a_1\cdot 1_b)\ld a_2 = (1_b1_b\cdot a_1)\ld a_2 \\
&= (1_b\cdot a_1)\ld (1_b\ld a_2) = (a_1\odot b) \ld (a_2\otimes b).    
\end{align*}
And, analogously, $(a_1\rd a_2)\otimes b = (a_1\otimes b)\rd (a_2\odot b)$.\qed
\end{proof}

\begin{corollary}\label{cor:residuated:posets:H456:are:Plonka:sums:meta}
The algebraic reduct of every residuated poset $\m A$ satisfying~\ref{H4}--\ref{H6} is the P\l{}onka sum of the directed system of metamorphisms between its components $\Xi = \{\xi_{pq}\: \m A_p\meta\m A_q : p\le q \text{ in }\m I\}$ given by 
\begin{align*}
\xi^1_{pq} &= \pair{\varphi_{pq}}, &
\xi^\cdot_{pq} &= \pair{\varphi_{pq},\varphi_{pq},\varphi_{pq}}, \\
\xi^\ld_{pq} &= \pair{\psi_{pq}, \varphi_{pq}, \psi_{pq}}, &
\xi^\rd_{pq} &= \pair{\psi_{pq}, \psi_{pq}, \varphi_{pq}}.
\end{align*}
where $\varphi_{pq},\psi_{pq}\: A_p\to A_q$ are defined by $\varphi_{pq}(a)=qa$ and $\psi_{pq}(a) = q\ld a$. Moreover, for all $p,q\in I$, $a\in A_p$, and $b\in A_q$,
\begin{equation*}
a\le b \quad\iff\quad \varphi_{ps}(a) \le_s \psi_{qs}(b),\quad \text{where } s=pq.
\end{equation*} 
\end{corollary}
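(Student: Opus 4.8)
The plan is to verify the three defining conditions of a partition system for the assignment given in Theorem~\ref{thm:H456:partition:meta}, and then simply apply Theorem~\ref{thm:Plonka:decomposition:meta} together with a separate argument for the order. By Propositions~\ref{prop:H4:monoidal:decomposition} and~\ref{prop:H56:left:normal:band:for:residuals}, $\odot$ and $\otimes$ are compatible left normal bands, $\odot$ satisfies~\hyperref[PF5s]{(PF5$^\cdot$)}, \hyperref[PF6w]{(PF6$^1$)} and~\hyperref[PF4s]{(PF4$^\cdot$)}, and $\otimes$ satisfies~\hyperref[PF5s]{(PF5$^\ld$)} and~\hyperref[PF5s]{(PF5$^\rd$)}. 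So Theorem~\ref{thm:H456:partition:meta} already establishes that the assignment $1\mapsto\odot$, $\cdot\mapsto\pair{\odot,\odot,\odot}$, $\ld\mapsto\pair{\otimes,\odot,\otimes}$, $\rd\mapsto\pair{\otimes,\otimes,\odot}$ is a partition system for the algebraic reduct of $\m A$; its induced equivalence relation is $a\equiv b\iff 1_a=1_b$, and its induced semilattice is $\m I\cong\pair{\Idp\m A,\cdot}$ by Proposition~\ref{prop:equivalent:char:balanced}. Parts~(1)--(3) of Theorem~\ref{thm:Plonka:decomposition:meta} then give directly that the algebraic reduct of $\m A$ is the P\l{}onka sum of the directed system $\Xi$, where by part~(2) the component maps are $\xi^{\sigma i}_{pq}(a)=a\odot^\sigma_i q$; unravelling the assignment, $a\odot q=1_q\cdot a=qa=\varphi_{pq}(a)$ and $a\otimes q=1_q\ld a=q\ld a=\psi_{pq}(a)$, which yields exactly the displayed tuples for $\xi^1_{pq},\xi^\cdot_{pq},\xi^\ld_{pq},\xi^\rd_{pq}$.

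The remaining work is the order characterization: for $a\in A_p$ and $b\in A_q$, show $a\le b\iff\varphi_{ps}(a)\le_s\psi_{qs}(b)$ with $s=pq$. For the forward direction, assume $a\le b$. Since $s=pq=1_a\cdot 1_b\ge 1_a$, the element $s$ is a positive idempotent above $1_a$, and $sa\in A_s$ (using~\ref{H4}, which follows from~\ref{H5}--\ref{H6}), so $\varphi_{ps}(a)=sa$ and $\psi_{qs}(b)=s\ld b$. From $a\le b$ and monotonicity of the product we get $sa\le sb$, and since $s\ge 1_b$ we have $sb=b$ by Lemma~\ref{lem:char:A/p=Ap} (as $b\in A_q$ and $q\le s$ means $b\in p'\ld A$ for $p'=s$, hence $sb=b$); thus $sa\le b$, and residuating, $sa\le s\ld b$. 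As both sides lie in $A_s$, this is $\varphi_{ps}(a)\le_s\psi_{qs}(b)$. For the converse, assume $sa\le s\ld b$, i.e.\ $sa\le s\ld b$ in $A_s$. Then $sa\le s\ld b\le 1\ld b=b$ (using positivity of $s$), and since $a\le 1_a\cdot a=a$ and $a=1_a a\le sa$ because $1_a\le s$ and the product is monotone with $1_a a=a$, we conclude $a\le sa\le b$.

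The step I expect to be the main obstacle is pinning down the order direction cleanly, in particular making sure that $\varphi_{ps}(a)$ and $\psi_{qs}(b)$ are the right representatives and that the inequality $\varphi_{ps}(a)\le_s\psi_{qs}(b)$ in the component $\m A_s$ really is the restriction of $\le$ — this is immediate since $\le_s$ is defined as the restriction of $\le$ to $A_s$, but one must be careful that both elements genuinely lie in $A_s$, which rests on the closure of $A_s$ under the operations (guaranteed by Lemma~\ref{lem:A:PF5:closure:under:sigma} applied to the partition system, or directly by~\ref{H4}--\ref{H6}). The rest is a routine interplay between residuation, positivity and idempotence of $s$, and the characterizations in Lemma~\ref{lem:char:A/p=Ap}; no genuinely new idea is needed beyond what is already in the lemmas of the Background section.
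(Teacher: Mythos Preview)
Your reduction of the first part to Theorems~\ref{thm:H456:partition:meta} and~\ref{thm:Plonka:decomposition:meta} is exactly what the paper does, and your unravelling of the maps $\varphi_{pq},\psi_{pq}$ from the partition system is correct.

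There is, however, a genuine error in the forward direction of the order characterization. You claim that $s\ge 1_b$ implies $sb=b$, citing Lemma~\ref{lem:char:A/p=Ap}. This is backwards: by that lemma, $sb=b$ is equivalent to $s\le b\rd b = 1_b = q$, whereas you only have $q\le s$. In general $sb$ is strictly above $b$ when $p\nleq q$, so the chain $sa\le sb = b$ fails. The easy fix is to observe that $sa = (pq)a = q(pa) = qa = 1_b\cdot a$ (using $pa=a$ since $p=1_a$, and Proposition~\ref{prop:equivalent:char:balanced}(7) for commuting the idempotents), and then $1_b\cdot a \le 1_b\cdot b = b$ by monotonicity; from $s\cdot sa = sa\le b$ you residuate to $sa\le s\ld b$ as desired. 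Your converse direction is fine.

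Two smaller remarks. First, the parenthetical ``\ref{H4}, which follows from~\ref{H5}--\ref{H6}'' is not established in the paper and is unnecessary here, since the corollary assumes~\ref{H4} outright. Second, the paper's own argument for the order is slicker than the direct computation: having already shown that $\ld$ is computed componentwise, it writes $a\ld b = \varphi_{ps}(a)\ld_s\psi_{qs}(b)\in A_s$ and then uses
\[
a\le b \iff 1\le a\ld b \iff s\le_s a\ld b \iff \varphi_{ps}(a)\le_s\psi_{qs}(b),
\]
where the middle equivalence holds because for $c\in A_s$ one has $1\le c\Rightarrow s=s\cdot 1\le sc=c$ and conversely $s\ge 1$. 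This avoids manipulating $sa$ and $s\ld b$ separately.
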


\begin{proof}
The first part is a consequence of Theorems~\ref{thm:Plonka:decomposition:meta} and~\ref{thm:H456:partition:meta}. The remaining part holds since if $a,b\in A$, $a\in A_p$, $b\in A_q$, and $s=pq$, then
\begin{align*}
a\le b &\iff 1\le a\ld b =\varphi_{ps}(a)\ld_s\psi_{qs}(b)
\iff 1_s = s \le \varphi_{ps}(a)\ld_s\psi_{qs}(b) \\
&\iff \varphi_{ps}(a) \le_s \psi_{qs}(b).    \tag*{\qed}
\end{align*}
\end{proof}

\section{Sums of Posets}\label{sec:sums:of:posets}

In our last result we showed that the order $\le$ of $\m A$ can be reconstructed from the orders $\le_p$ of its components together with $\Phi$ and $\Psi$. 
In this section, we investigate this aspect of the reconstruction of the order of~$\m A$ in an isolated manner. More concretely, given a family $\{\m A_p : p\in I\}$ of disjoint posets, we would like to define an order $\le$ on the disjoint union $A = \biguplus A_p$ of their universes \emph{extending} each one of the partial orders, meaning that ${\le}\cap A_p^2 = {\le_p}$, for every $p\in I$. Obviously, we could just take ${\le} = \bigcup_p{\le_p}$, but this would be insufficient for our needs, since the families of posets in which we are interested are actually connected via some maps, and the order $\le$ should be compatible, in some sense, with these maps.

Consider a join-semilattice $\m I = \pair{I,\lor}$, and a pair $\pair{\Phi,\Psi}$ of directed systems of monotone maps $\Phi = \{\varphi_{pq}\: \m A_p \to \m A_q: p\le q \text{ in }\m I\}$ and $\Psi = \{\psi_{pq}\: \m A_p \to \m A_q: p\le q \text{ in }\m I\}$, and define the relation $\le$ on $A = \biguplus A_p$ as follows: for all $p,q\in I$, $a\in A_p$, and $b\in A_q$,
\begin{equation}\label{eq:def:sum:order}
a\le b \quad\iff\quad \varphi_{ps}(a) \le_s \psi_{qs}(b),\quad \text{where } s=p\lor q.    
\end{equation}
Now, this relation doesn't have to be an order in general, but it will be if the following three conditions are satisfied. In that case, we call $\pair{A,\le}$ the \emph{sum} of the family of posets $\{\m A_p :p\in I\}$ \emph{over} $\pair{\Phi,\Psi}$.

\begin{enumerate}[(O1),leftmargin=*]
\item\label{O1} if $p < q$ then $\psi_{pq} <_q \varphi_{pq}$ pointwise,

\item\label{O2} if $p \le q,r$ and $t = q\lor r$, then $\varphi_{qt}\psi_{pq} \le_t \psi_{rt}\varphi_{pr}$ pointwise,

\item\label{O3} for all $a,b\in A_p$ and $p\le q$, if $\varphi_{pq} (a) \le_q \psi_{pq} (b)$, then $a \le_p b$.
\end{enumerate}

\begin{theorem}\label{thm:char:sums:of:posets:over:directed:systems}
Given a pair $\pair{\Phi,\Psi}$ of directed systems of monotone maps, the relation $\le$ defined by~\eqref{eq:def:sum:order} is a partial order extending the order of each poset if and only if $\pair{\Phi,\Psi}$ satisfies~\ref{O1}--\ref{O3}. 
\end{theorem}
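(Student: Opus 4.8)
The plan is to prove both directions of the equivalence, treating the relation $\le$ defined by~\eqref{eq:def:sum:order} and establishing, on the one hand, that the axioms~\ref{O1}--\ref{O3} are necessary, and on the other, that they suffice to make $\le$ a partial order extending each $\le_p$.

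For the ``only if'' direction, I would assume $\le$ is a partial order extending each component order and derive each of~\ref{O1}--\ref{O3} by specializing the definition~\eqref{eq:def:sum:order}. First note the sanity check: if $a, b \in A_p$, then $s = p \lor p = p$ and $\varphi_{pp} = \psi_{pp} = \mathrm{id}$ (directed system), so $a \le b \iff a \le_p b$; this is exactly the ``extending'' clause and will be used freely. For~\ref{O3}: given $a, b \in A_p$ with $\varphi_{pq}(a) \le_q \psi_{pq}(b)$, I want $a \le_p b$. The idea is to produce an element $c \in A_q$ with $\varphi_{pq}(a) \le_q c$ and $c \le_q \psi_{pq}(b)$ forcing $a \le b$ by transitivity in $\m A$ — but more directly, one checks that $\varphi_{pq}(a) \le_q \psi_{pq}(b)$ is precisely the statement ``$\varphi_{pq}(a) \le \psi_{pq}(b)$'' read inside $A_q$, hmm; the cleaner route is: the hypothesis says (viewing things via~\eqref{eq:def:sum:order} with both elements already in $A_q$, where the join is $q$ and the maps are identities) that $\varphi_{pq}(a) \le \psi_{pq}(b)$ in $\m A$. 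I need to connect this back to $a \le_p b$; here one uses that $b \le \psi_{pq}(b)$ fails in general, so instead I think~\ref{O3} must come from transitivity together with~\ref{O1}: from $\varphi_{pq}(a) \le_q \psi_{pq}(b)$ and the fact (to be extracted) that $a$ relates to $\varphi_{pq}(a)$ and $\psi_{pq}(b)$ relates to $b$ appropriately. For~\ref{O1}, the key point is antisymmetry: if $p < q$ then for any $a \in A_p$ we have $a \ne \varphi_{pq}(a)$ (different components), and one shows $a \le \varphi_{pq}(a)$ and $\varphi_{pq}(a) \not\le a$; unwinding the latter via~\eqref{eq:def:sum:order} (with $s = q$) gives $\varphi_{qq}\varphi_{pq}(a) \not\le_q \psi_{pq}(a)$, i.e.\ $\varphi_{pq}(a) \not\le_q \psi_{pq}(a)$, and combined with $\psi_{pq}(a) \le_q \varphi_{pq}(a)$ (which should follow from $a \le \varphi_{pq}(a)$... wait, one needs $\psi_{pq}(a) \le_q \varphi_{pq}(a)$, getting this from reflexivity $a \le a$ unwound at different components) one gets $\psi_{pq} <_q \varphi_{pq}$ pointwise. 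For~\ref{O2}, the witnessing identity is transitivity: if $p \le q, r$, pick $a \in A_p$; then $a \le_q \varphi_{pq}(a)$-style facts plus transitivity through a common upper bound $t = q \lor r$ of $q$ and $r$ will yield $\varphi_{qt}\psi_{pq}(a) \le_t \psi_{rt}\varphi_{pr}(a)$ after carefully expanding $a \le a$ along two different ``paths.''

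For the ``if'' direction, assume~\ref{O1}--\ref{O3}; I must show $\le$ is reflexive, antisymmetric, transitive, and restricts to $\le_p$ on each $A_p$. The restriction clause and reflexivity are immediate from $\varphi_{pp} = \psi_{pp} = \mathrm{id}$. Antisymmetry: suppose $a \le b$ and $b \le a$ with $a \in A_p$, $b \in A_q$. If $p \ne q$, at least one of them, say $p \not\ge q$ in the semilattice, so $p < s = p \lor q$; then $a \le b$ gives $\varphi_{ps}(a) \le_s \psi_{qs}(b)$ and $b \le a$ gives $\varphi_{qs}(b) \le_s \psi_{ps}(a)$. Using~\ref{O1} (since $p < s$), $\psi_{ps}(a) <_s \varphi_{ps}(a)$, so chaining: $\varphi_{qs}(b) \le_s \psi_{ps}(a) <_s \varphi_{ps}(a) \le_s \psi_{qs}(b)$, giving $\varphi_{qs}(b) <_s \psi_{qs}(b)$; if also $q < s$ this contradicts~\ref{O1} directly, and if $q = s$ then $\varphi_{qs} = \psi_{qs} = \mathrm{id}$ and we get $b <_s b$, a contradiction. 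Hence $p = q$, and then the restriction clause plus antisymmetry of $\le_p$ give $a = b$. Transitivity is the main obstacle and the technical heart: suppose $a \le b \le c$ with $a \in A_p$, $b \in A_q$, $c \in A_r$; set $s = p \lor q$, $u = q \lor r$, and the target join $v = p \lor r$; I want $\varphi_{pv}(a) \le_v \psi_{rv}(c)$. The strategy is to push both hypotheses up to the common index $w = p \lor q \lor r$, apply monotonicity of the maps and the directed-system equations to rewrite everything in $A_w$, combine using~\ref{O2} (which provides the crucial ``commuting up to inequality'' between $\varphi$-then-$\psi$ and $\psi$-then-$\varphi$ composites needed to interleave the two chains through the middle index $q$), obtain the inequality $\varphi_{pw}(a) \le_w \psi_{rw}(c)$ in $A_w$, and finally descend from $w$ to $v$ using~\ref{O3} (with the pair of indices $v \le w$), since $\varphi_{pw} = \varphi_{vw}\varphi_{pv}$ and $\psi_{rw} = \psi_{vw}\psi_{rv}$, so $\varphi_{vw}(\varphi_{pv}(a)) \le_w \psi_{vw}(\psi_{rv}(c))$ and~\ref{O3} applied to $\varphi_{pv}(a), \psi_{rv}(c) \in A_v$ yields $\varphi_{pv}(a) \le_v \psi_{rv}(c)$, i.e.\ $a \le c$.

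The hard part will be bookkeeping in the transitivity argument: correctly tracking which map ($\varphi$ or $\psi$) is applied where as the two inequalities are transported to the common index $w$, and invoking~\ref{O2} with exactly the right instantiation of indices so that the $\psi_{qt}\varphi_{pq}$-versus-$\varphi_{rt}\psi_{pr}$ mismatch at the middle component $q$ gets absorbed. One should double-check that the pointwise inequalities~\ref{O1} and~\ref{O2}, together with monotonicity of all maps in $\Phi$ and $\Psi$ and the functoriality equations $\varphi_{qr}\varphi_{pq} = \varphi_{pr}$ and $\psi_{qr}\psi_{pq} = \psi_{pr}$, really do compose into the single inequality at $w$; this is essentially a diagram chase, and the main risk is an off-by-one in the semilattice indices. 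Once transitivity is secured, everything else is routine, so I would present transitivity last and in the most detail.
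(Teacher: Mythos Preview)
Your ``if'' direction is correct and matches the paper's proof exactly: extension and reflexivity from $\varphi_{pp}=\psi_{pp}=id$, antisymmetry from~\ref{O1}, and transitivity by pushing both hypotheses to $w=p\lor q\lor r$, using~\ref{O2} at the middle index~$q$ to convert $\varphi_{sw}\psi_{qs}$ into something bounded by $\psi_{uw}\varphi_{qu}$, and then descending from $w$ to $p\lor r$ via~\ref{O3}. Your ``only if'' direction has the right ingredients but you never quite locate the organizing observation, and your attempts to get $\psi_{pq}(a)\le_q\varphi_{pq}(a)$ ``from reflexivity $a\le a$ unwound at different components'' or to derive~\ref{O2} by ``expanding $a\le a$ along two paths'' do not work as stated: reflexivity only lives in a single component. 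The clean route (and the paper's) is to first note, straight from~\eqref{eq:def:sum:order}, that for every $p\le q$ and $a\in A_p$ one has $\psi_{pq}(a)\le a$ and $a\le\varphi_{pq}(a)$ in the global order (both unwind to trivial inequalities in $A_q$). Then~\ref{O2} is simply transitivity applied to $\psi_{pq}(a)\le a\le\varphi_{pr}(a)$ and read back through~\eqref{eq:def:sum:order}; \ref{O1} follows because equality $\psi_{pq}(a)=\varphi_{pq}(a)$ would force $a=\psi_{pq}(a)$ by antisymmetry, impossible across disjoint components; and~\ref{O3} is the chain $a\le\varphi_{pq}(a)\le\psi_{pq}(b)\le b$.
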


\begin{proof} 
Suppose that $\pair{\Phi,\Psi}$ satisfies~\ref{O1}--\ref{O3}. The fact that the restriction of $\le$ to $A_p$ is $\le_p$, for every $p\in I$, follows immediately from definition~\eqref{eq:def:sum:order} and the fact that that $\varphi_{pp} = \psi_{pp} = id_{A_p}$. Indeed, if $a,b\in A_p$ are such that $a\le b$, then $p = p\lor p$ and $a = \varphi_{pp}(a) \le_p \psi_{pp}(b) = b$. And reciprocally, if $a\le_p b$, then $\varphi_{pp}(a) = a \le_p b = \psi_{pp}(b)$, and therefore $a\le b$. In particular, reflexivity follows.

In order to prove antisymmetry, suppose that $a\in A_p$ and $b\in A_q$ are such that $a\le b$ and $b\le a$. Consider $s = p\lor q$ and suppose that $p\neq q$. Then, by the definition of $\le$ and condition~\ref{O1}, we have that
\[
\varphi_{ps}(a) \le_s \psi_{qs} (b) <_s \varphi_{qs} (b) \le_s \psi_{ps} (a) <_s \varphi_{ps} (a),
\]
which is impossible. Thus, $p = q = s$, $a\le_p b$, and $b\le_p a$, and hence $a=b$.

As for transitivity, suppose that $a\in A_p$, $b\in A_q$, and $c\in A_r$ are such that $a\le b\le c$. Consider $s = p\lor q$, $t = q\lor r$, $u = p\lor r$, $v = s\lor t$, and notice that $u \le v$. By definition of the order, we have that (*)~$\varphi_{ps} (a) \le_s \psi_{qs} (b)$ and (**)~$\varphi_{qt} (b) \le_t \psi_{rt} (c)$. Now,
\begin{align*}
\varphi_{uv} (\varphi_{pu} (a)) &= \varphi_{pv} (a)
	= \varphi_{sv} (\varphi_{ps} (a))      &&\text{by the compatibility of $\Phi$}\\
	&\le_v \varphi_{sv} (\psi_{qs} (b))	   &&\text{by the monotonicity of $\varphi_{sv}$ and (*)}\\
	&\le_v \psi_{tv} (\varphi_{qt} (b))    &&\text{by~\ref{O2}}\\
	&\le_v \psi_{tv} (\psi_{qt} (c))       &&\text{by the monotonicity of $\psi_{tv}$ and (**)}\\
	&= \psi_{qv} (c)
	= \psi_{uv} (\psi_{qu} (c)),           &&\text{by the compatibility of $\Psi$.}
\end{align*}
By~\ref{O3}, we deduce that $\varphi_{pu} (a) \le_u \psi_{qu} (c)$, that is $a \le c$.

In order to prove the reverse implication, suppose that the relation given by~\eqref{eq:def:sum:order} is a partial order extending the order of every $\m A_p$. First of all, notice that if $p\le q,r$ and $a\in A_p$, then $q = p\lor q$ and $\varphi_{qq}(\psi_{pq}(a) )\le_q \psi_{pq}(a)$, and therefore $\psi_{pq}(a)\le a$. Analogously, $a\le\varphi_{pr}(a)$, and therefore $\psi_{pq}(a)\le\varphi_{pr}(a)$, by transitivity of $\le$. That is, by definition~\eqref{eq:def:sum:order}, $\varphi_{qt}(\psi_{pq}(a)) \le_t \psi_{rt}(\varphi_{pr}(a))$, where $t = q\lor r$. That is, \ref{O2} holds. In particular, taking $q=r$, we have that $\psi_{pq}(a) \le_q \varphi_{pq}(a)$. But if $\varphi_{pq}(a) = \psi_{pq}(a)$ then we would have that $\psi_{pq}(a) = a$, by transitivity and antisymmetry of $\le$, which would only be possible if $p=q$. Hence, if $p<q$ then $\psi_{pq}(a) <_q \varphi_{pq}(a)$, which shows that~\ref{O1} holds too. Finally, if $a,b\in A_p$ and $p\le q$ are such that $\varphi_{pq}(a)\le_q\psi_{pq}(b)$, then $a \le \varphi_{pq}(a) \le \psi_{pq} (b) \le b$, as we have shown, and by transitivity we obtain that $a\le b$, and in particular $a\le_p b$.\qed 
\end{proof}

Our last result in this paper is a reciprocal to Corollary~\ref{cor:residuated:posets:H456:are:Plonka:sums:meta}. It tells us under which conditions the P\l{}onka sum of a directed system of metamorphisms can be endowed with a compatible order, making it a residuated poset.

\begin{theorem}\label{thm:construction:meta}
Let $\{\m A_p\: p\in I\}$ be a family of residuated posets indexed on a join semilattice $\m I = \pair{I,\lor}$ with least element $\bot$ and $\Phi,\Psi$ a pair of semilattice directed systems of monotone maps such that $\Xi=\{\xi_{pq}\:\m A_p\meta\m A_q : p\le q \text{ in }\m I\}$ is a directed system of metamorphisms defined by
\begin{align*}
\xi^1_{pq} &= \pair{\varphi_{pq}}, &
\xi^\cdot_{pq} &= \pair{\varphi_{pq},\varphi_{pq},\varphi_{pq}}, \\
\xi^\ld_{pq} &= \pair{\psi_{pq}, \varphi_{pq}, \psi_{pq}}, &
\xi^\rd_{pq} &= \pair{\psi_{pq}, \psi_{pq}, \varphi_{pq}}.
\end{align*}
and $\pair{\Phi,\Psi}$ satisfies~\ref{O1}--\ref{O3}. Then the P\l{}onka sum of $\Xi$ together with the sum of the poset reducts over $\pair{\Phi,\Psi}$ is a residuated poset.
\end{theorem}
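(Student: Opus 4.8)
Let me write $A=\biguplus_{p\in I}A_p$ and $\m A=\pair{A,\le,\cdot,\ld,\rd,1}$ for the structure whose algebraic reduct is the P\l{}onka sum of $\Xi$ and whose order $\le$ is defined by~\eqref{eq:def:sum:order}. The plan is to verify, in turn, the three conditions that make $\m A$ a residuated poset: that $\pair{A,\le}$ is a poset, that $\pair{A,\cdot,1}$ is a monoid, and that $\cdot$ has $\ld$ and $\rd$ as residuals with respect to $\le$. The first is immediate, since $\pair{\Phi,\Psi}$ satisfies~\ref{O1}--\ref{O3}: Theorem~\ref{thm:char:sums:of:posets:over:directed:systems} tells us that $\le$ is a partial order on $A$ whose restriction to each $A_p$ is $\le_p$. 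For the monoid axioms, I would note that because $\xi^1_{pq}=\pair{\varphi_{pq}}$ and $\xi^\cdot_{pq}=\pair{\varphi_{pq},\varphi_{pq},\varphi_{pq}}$, the metamorphism equations of $\xi_{pq}$ at the symbols $1$ and $\cdot$ say precisely that each $\varphi_{pq}$ is a monoid homomorphism $\pair{A_p,\cdot_p,1_p}\to\pair{A_q,\cdot_q,1_q}$; hence the monoidal reduct of $\m A$ coincides with the \emph{ordinary} P\l{}onka sum of the semilattice directed system $\Phi$ of monoid homomorphisms, and since associativity and the two unit laws are regular equations valid in every $\pair{A_p,\cdot_p,1_p}$, they hold in $\m A$ as well. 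So $\pair{A,\cdot,1}$ is a monoid.

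The heart of the proof is residuation. Since each $\xi_{ij}$ is a metamorphism and the $\xi_{ij}$ form a directed system, for all $i\le j$ in $\m I$ the maps $\varphi_{ij},\psi_{ij}$ satisfy
\begin{align*}
\varphi_{ij}(x\cdot_i y)&=\varphi_{ij}(x)\cdot_j\varphi_{ij}(y),\\
\psi_{ij}(x\ld_i y)&=\varphi_{ij}(x)\ld_j\psi_{ij}(y),\\
\psi_{ij}(x\rd_i y)&=\psi_{ij}(x)\rd_j\varphi_{ij}(y),
\end{align*}
and $\varphi_{jk}\circ\varphi_{ij}=\varphi_{ik}$, $\psi_{jk}\circ\psi_{ij}=\psi_{ik}$ for $i\le j\le k$; I will use these throughout. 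Now fix $a\in A_p$, $b\in A_q$, $c\in A_r$ and put $s=p\lor q$, $t=q\lor r$, $w=p\lor r$, and $u=p\lor q\lor r$. The key claim is that each of $ab\le c$, $a\le c\rd b$, and $b\le a\ld c$ is equivalent to the single inequality
\begin{equation*}
\varphi_{pu}(a)\cdot_u\varphi_{qu}(b)\le_u\psi_{ru}(c)\tag{$\star$}
\end{equation*}
in the residuated poset $\m A_u$, which delivers the three defining equivalences of residuation simultaneously. For each of the three I would proceed identically: first unfold the P\l{}onka-sum operation, so that $ab=\varphi_{ps}(a)\cdot_s\varphi_{qs}(b)\in A_s$, and $c\rd b=\psi_{rt}(c)\rd_t\varphi_{qt}(b)\in A_t$, and $a\ld c=\varphi_{pw}(a)\ld_w\psi_{rw}(c)\in A_w$; then use~\eqref{eq:def:sum:order} to rewrite the comparison as an inequality in $\m A_u$, in which the composite side is carried there by $\varphi_{su}$, $\psi_{tu}$, or $\psi_{wu}$ respectively; finally push that map through the operation using the displayed metamorphism identities, and collapse the resulting two-step maps into $\varphi_{pu}$, $\varphi_{qu}$, and $\psi_{ru}$ by the directed-system identities. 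For the product this produces~($\star$) outright; for the two residuals it produces $\varphi_{pu}(a)\le_u\psi_{ru}(c)\rd_u\varphi_{qu}(b)$ and $\varphi_{qu}(b)\le_u\varphi_{pu}(a)\ld_u\psi_{ru}(c)$, each of which is equivalent to~($\star$) by the residuation law already holding in $\m A_u$.

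I expect no genuine obstacle, only the need for care with indices: one must keep straight which of $\varphi$ and $\psi$ occupies each slot — as read off from $\xi^\ld_{pq}=\pair{\psi_{pq},\varphi_{pq},\psi_{pq}}$ and $\xi^\rd_{pq}=\pair{\psi_{pq},\psi_{pq},\varphi_{pq}}$ — and one must carry out every comparison in the join $u=p\lor q\lor r$, the least index into which the images of $a$, $b$, and $c$ can all be sent at once and in which the residuation law of $\m A_u$ does the actual work. It is worth noting that conditions~\ref{O1}--\ref{O3} enter only via Theorem~\ref{thm:char:sums:of:posets:over:directed:systems}, to make $\le$ a partial order; the residuation computation is forced by the metamorphism and directed-system axioms alone.
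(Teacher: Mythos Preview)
Your proposal is correct and follows essentially the same approach as the paper's proof: the poset and monoid parts are handled identically, and for residuation the paper performs the same reduction to the component $\m A_u$ with $u=p\lor q\lor r$ (checking only $\ld$, with slightly different index names), computing $\psi_{wu}(a\ld c)=\varphi_{pu}(a)\ld_u\psi_{ru}(c)$ and then running exactly your chain $a\cdot b\le c\iff\varphi_{pu}(a)\cdot_u\varphi_{qu}(b)\le_u\psi_{ru}(c)\iff\varphi_{qu}(b)\le_u\varphi_{pu}(a)\ld_u\psi_{ru}(c)\iff b\le a\ld c$. Your explicit pivot~$(\star)$ and the parallel treatment of both residuals make the symmetry a bit more visible, but the argument is the same.
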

\begin{proof}
$\pair{A,\cdot,1}$ is a monoid since by assumption $\Phi$ is a system of monoid homomorphisms. The relation $\le$ defined by \eqref{eq:def:sum:order} is a partial order by Theorem~\ref{thm:char:sums:of:posets:over:directed:systems}. For residuation (checking only $\ld$\,), let $a\in A_p$, $b\in A_q$, $c\in A_r$, $s=p\vee q$, $t=p\vee r$, and $u=s\vee r = q\vee t$. Then,
\[
\psi_{tu}(a\ld c) = \psi_{tu}\big(\varphi_{pt}(a)\ld_t\psi_{rt}(c)\big)
= \varphi_{tu}\varphi_{pt}(a)\ld_u\psi_{tu}\psi_{rt}(c)
= \varphi_{pu}(a)\ld_u\psi_{ru}(c).
\]
Hence,
\begin{align*}
    a\cdot b\le c&\iff \varphi_{ps}(a)\cdot_s\varphi_{qs}(b)\le c
    \iff \varphi_{pu}(a)\cdot_u\varphi_{qu}(b)\le_u \psi_{ru}(c)\\
    &\iff \varphi_{qu}(b)\le_u  \varphi_{pu}(a)\ld_u\psi_{ru}(c) = \psi_{tu}(a\ld c)
    \iff b\le a\ld c.\tag*{\qed}
\end{align*}
\end{proof}

\section{Instructive Examples}

\paragraph{\bfseries Involutive po-monoids.}
An \emph{involutive po-monoid}, or \emph{ipo-monoid}, (see~\cite{GFJiLo23}) is a structure of the form $\m A = \pair{A,\le,\cdot,1,\nein,\no}$ such that $\pair{A,\le}$ is a poset and $\pair{A,\cdot, 1}$ is a monoid satisfying
\[
x\le y\iff x\cdot\nein y\le \no 1 \iff \no y\cdot x\le \no 1. \tag{ineg}
\]
It is known that the product is residuated with residuals defined by $x\ld y = \nein(\no y\cdot x)$ and $x\rd y = \no(y\cdot\nein x)$. An ipo-monoid $\m A$ is \emph{locally integral} if it is balanced, and it satisfies $x\le 1_x$ and $x\ld 1_x = 1_x$, where $1_x = x\rd x = \no(x\cdot\nein x)$. In that case, it can be shown that $\odot,\otimes\: A^2\to A$ defined by $a\odot b = 1_b\cdot a$ and $a\otimes b = \no(1_b\cdot\nein a)$ are compatible left normal bands and the assignment $1\mapsto\odot$, $\cdot\mapsto\pair{\odot,\odot,\odot}$, $\nein\mapsto\pair{\otimes,\odot}$, and $\no\mapsto\pair{\otimes,\odot}$ is a partition system for~$\m A$. Hence, every locally integral ipo-monoid is a P\l{}onka sum, and its order can be recovered by~\eqref{eq:def:sum:order}. This is precisely the decomposition obtained in an ad hoc manner in~\cite{GFJiLo23}.

\paragraph{\bfseries P\l{}onka sum of two residuated posets.}
Consider two residuated posets $\m A_1$ and $\m A_2$ and two directed systems $\Phi=\{\varphi_{pq} : p\le q\}$ and $\Psi=\{\psi_{pq} : p\le q\}$, indexed over the 2-element chain $1<2$, such that the nonidentity maps $\varphi_{12},\psi_{12}\: A_1\to A_2$ are defined by
\[
a\mapsto\varphi_{12}(a) = 1^{\m A_2}\quad\text{and}\quad a\mapsto\psi_{12}(a) = 0,
\]
with $0$ a fixed element in $A_2$ such that $0 < 1^{\m A_2}$. Then $\pair{\Phi,\Psi}$ satisfies the conditions of  Theorem~\ref{thm:construction:meta}, and hence we obtain a residuated poset $\m S = \m A_1\uplus\m A_2$ (Fig.~\ref{fig:residuatedposetsum}).

\begin{figure}[ht]
\centering
\begin{tikzpicture}[every label/.append style={font=\scriptsize}]
\node at (0,3)[n]{\scriptsize$\m A_1$};
\node at (5,3)[n]{\scriptsize$\m A_2$};

\draw (5,0)..controls(2,2)and(2,4)..(5,6)..controls(8,4)and(8,2)..(5,0)
(-1.4,4)--(-.8,3.5)--(0,4)--(.8,3.5)--
(1.4,4)..controls(2,3.5)and(2,2.5)..(1.4,2)
(-1.4,4)..controls(-2,3.5)and(-2,2.5)..(-1.4,2)--
(-.8,2.5)--(0,2)--(.8,2.5)--(1.4,2)--(5,1)(0,2)--(5,1)(-1.4,2)--(5,1)
(1.4,4)--(5,5)(0,4)--(5,5)(-1.4,4)--(5,5)
(5,1) node(0p)[label=right:$0$]{}
(5,5) node(1p)[label=-45:$1^{\m A_2}$]{};
\end{tikzpicture}
\caption{Example of a P\l{}onka sum $\m A_1\uplus\m A_2$ of two residuated posets.}
\label{fig:residuatedposetsum}
\end{figure}
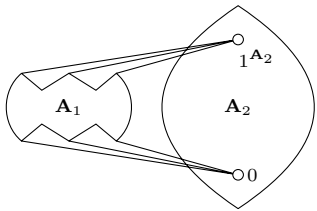

\paragraph{\bfseries Doubly chopped lattices.}
A \emph{doubly chopped lattice} (see \cite{GratzerSchmidt95}, \cite{GratzerBook}) is a poset in which every pair of elements with an upper bound has a join and every pair of elements with a lower bound has a meet. Such a poset will become a lattice if a new top and bottom element is added to the poset, and conversely, every doubly chopped lattice is obtained from a bounded lattice by removing the bounds.

If $\m A$ is a residuated poset that is a doubly chopped lattice and ${\m B}$ is a residuated lattice then the P\l{}onka sum $\m S = \m A\uplus\m B$ is a lattice under the following operations:
{\small
\begin{align*}
a\vee^{\m S} b &= \begin{cases}
  a\vee^{{\m A}} b& \text{ if } a,b\in A\text{ have an upper bound in $A$}\\
  1^\m B& \text{ if } a,b\in A\text{ have no upper bound in $A$}\\
  a\vee^{{\m B}} b& \text{ if } a,b\in B \\
  a &\text{ if } a\in A, \;b\in B \text{ with } b\le 0 \\
  b\vee^{{\m B}} 1^{{\m B}} &\text{ if } a\in A, \;b\in B \text{ with } b\not\le 0,
\end{cases} \\
a\wedge^{\m S} b &= \begin{cases}
  a\wedge^{{\m A}} b&\text{ if } a,b\in A \text{ have a lower bound in $A$}\\
  0&\text{ if } a,b\in A \text{ have no lower bound in $A$}\\
  a\wedge^{{\m B}} b& \text{ if } a,b\in B \\
  a &\text{ if } a\in A, \;b\in B \text{ with } 1^{{\m B}}\le b \\
  b\wedge^{{\m B}} 0 &\text{ if } a\in A, \;b\in B \text{ with } 1^{{\m B}}\not\le b.
\end{cases}
\end{align*}
}

It is easily checked that $a,b\le a\vee^{\m S} b$. Moreover, consider $a,b\le^{\m S} c$, for some $c\in A\cup B $, $a\in A$, and $b\in B$ (the other cases are trivial). If $c\in A$ then $b\le^{\m S} c$ implies $b\le^{\m B} 0$, then $a\vee^{\m S}b = a\le c$. Alternatively, if $c\in B$ then $a\le^\m S c$ implies $1^{\m B}\le c$, then $a\vee^{\m S}b =1^{\m B}\vee^{\m S}b\le c $. The case of $\wedge^{\m S} $ can be checked analogously.

\paragraph{\bfseries The relation algebra $\mathcal P(\mathbb Z_2)$.}
For any monoid $\m M =\pair{M,\cdot,e}$ the \emph{complex algebra} $\mathcal P(\m M)$ is the residuated lattice $\pair{\mathcal P(M),\cap,\cup,\cdot,\ld,\rd,\{e\}}$, where for all $X,Y\subseteq M$, $X\cdot Y=\{xy\: x\in X, y\in Y\}$, $X\ld Y=\{z\in M\: X\cdot\{z\}\subseteq Y\}$ and $X\rd Y=\{z\in M\: \{z\}\cdot Y\subseteq X\}$. The P\l{}onka sum with two components presented above (Fig.~\ref{fig:residuatedposetsum}) encompasses the example of the 4-element relation algebra $\mathcal P(\mathbb Z_2)$ given by the complex algebra of the 2-element group (Fig.~\ref{fig:a2}). Note that this relation algebra is not locally integral~\cite{GFJiLo23}, but it is balanced and satisfies the identities~\ref{H4}--\ref{H6}, hence the P\l{}onka sum decomposition can be applied to all members of the variety of relation algebras generated by $\mathcal P(\mathbb Z_2)$.

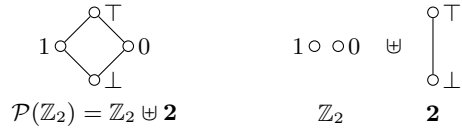
\begin{figure}[ht]
\centering
\begin{tikzpicture}[baseline=0pt]
\node at (0,-1)[n]{$\mathcal P(\mathbb Z_2)=\mathbb Z_2\uplus\mathbf 2$};
\node(3) at (0,2)[label=right:$\top$]{};
\node(2) at (1,1)[label=right:$0$]{} edge (3);
\node(1) at (-1,1)[label=left:$1$]{} edge (3);
\node(0) at (0,0)[label=right:$\bot$]{} edge (1) edge (2);
\end{tikzpicture}
\qquad \qquad
\begin{tikzpicture}[baseline=0pt]
\node at (-2,-1)[n]{$\mathbb Z_2$};
\node at (1,-1)[n]{$\m2$};
\node(2) at (-1.8,1)[label=right:$0$]{};
\node(1) at (-2.5,1)[label=left:$1$]{};
\node(1) at (-.15,1)[n]{$\uplus$};
\node(3) at (1,2)[label=right:$\top$]{};
\node(0) at (1,0)[label=right:$\bot$]{} edge (3);
\end{tikzpicture}

    \caption{A 4-element relation algebra obtained from the P\l{}onka sum of the 2-element group and the 2-element Boolean algebra.}
    \label{fig:a2}
\end{figure}

\section{Conclusion}
We have generalized the concept of P\l{}onka sum to multiple partition functions, which enables us to cover a larger class of algebras. We further characterized when a pair of directed systems of monotone maps can construct a poset from a family of posets. In that way we are able to recover the partial order with respect to which a given tuple of operations is residuated. This allows the reconstruction of balanced residuated posets from components that have the identity element as their unique positive idempotent. In particular the variety of relation algebras generated by $\mathcal P(\mathbb Z_2)$ contains balanced residuated posets (as reducts) that satisfy~\ref{H4}--\ref{H6}, and hence P\l{}onka sums of directed systems of metamorphisms provide a description of the structure of these algebras in terms of simpler components.

\section*{Acknowledgments}

We thank the anonymous referees for their detailed comments and observations, which allowed to improve the quality of the paper.
S. Bonzio acknowledges the support by the Italian Ministry of Education, University and Research through the projects PRIN 2022 DeKLA (``Developing Kleene Logics and their Applications'', code: 2022SM4XC8) and PRIN Pnrr project Qm4Np (``Quantum Models for Logic, Computation and Natural Processes'', code: P2022A52CR). He also acknowledges the Fondazione di Sardegna for the support received by the project MAPS (grant number F73C23001550007). Finally, he gratefully acknowledges also the support of the INDAM GNSAGA (Gruppo Nazionale per le Strutture Algebriche, Geometriche e loro Applicazioni). The work of A. P\v{r}enosil was funded by the grant 2021 BP 00212 of the grant agency AGAUR of the Generalitat de Catalunya. S. Bonzio and A. P\v{r}enosil also acknowledge the support of the MOSAIC project (H2020-MSCA-RISE-2020 Project 101007627), which funded their visits to Chapman University.

\end{document}